\theoremstyle{plain} 
\newtheorem{lemma}[equation]{Lemma} 
\newtheorem{proposition}[equation]{Proposition} 
\newtheorem{theorem}[equation]{Theorem} 
\newtheorem{corollary}[equation]{Corollary} 
\newtheorem{conjecture}[equation]{Conjecture}
\DeclareMathOperator{\ssup}{{\textup{sup}}}
\DeclareMathOperator{\mmax}{{\textup{max}}}
\DeclareMathOperator{\iinf}{{\textup{inf}}}
\renewcommand{\sup}{\ssup \displaylimits}
\renewcommand{\inf}{\iinf \displaylimits}
\renewcommand{\max}{\mmax \displaylimits}
\renewcommand{\log}{\textup{log}}
\renewcommand{\sin}{\textup{sin}}
\theoremstyle{definition}
\newtheorem{definition}[equation]{Definition} 
\theoremstyle{remark}
\numberwithin{equation}{section}
\title[Sparse Bounds for Walsh Multipliers] {Endpoint Sparse Bounds   for Walsh-Fourier multipliers of Marcinkiewicz type}
 \subjclass[2000]{Primary: 42B20 Secondary: 42B25, 42B35}
 \author[Wei Chen]{Wei Chen}
\address{School of Mathematical Sciences, Yangzhou University, Yangzhou 225002, China
\newline \indent School of Mathematics, Georgia Institute of Technology, Atlanta GA 30332, USA}
\email {weichen@yzu.edu.cn}
 \author[A.\ Culiuc]{Amalia Culiuc}
 \address{Department of Mathematics and Statistics, Amherst College, \newline  \indent
 202 Seeley Mudd Building, 31 Quadrangle
Amherst, MA 01002 USA}
\email{aculiuc@amherst.edu (A.\ Culiuc)}
 \author[F. Di Plinio]{Francesco Di Plinio} \address{\noindent Department of Mathematics, University of Virginia,  \newline \indent Kerchof Hall,  Box 400137, Charlottesville, VA 22904-4137, USA}
 \email{francesco.diplinio@virginia.edu}
\author[M.\ Lacey]{Michael Lacey}   
\address{School of Mathematics, Georgia Institute of Technology, Atlanta GA 30332, USA}
\email{lacey@math.gatech.edu (M.\ Lacey)}
 \author[Y.\ Ou]{Yumeng Ou}
 \address{\noindent Department of Mathematics, City University of New York - Baruch College, \newline  \indent Box B6-230, One Bernard Baruch Way, New York, NY 10010 }
\email{yumeng.ou@cuny.baruch.edu (Y.\ Ou)} 
\thanks{Wei Chen is supported by the National Natural Science Foundation of China (11771379), the Natural Science Foundation of Jiangsu Province (BK20161326), and the Jiangsu Government Scholarship for Overseas Studies (JS-2017-228). 
Amalia Culiuc  is supported in part by the National Science Foundation under the grant   NSF-DMS-1853112. 
F.\ Di Plinio is partially
supported by the National Science Foundation under the grants   NSF-DMS-1650810 and  NSF-DMS-1800628.
M.\ Lacey's research supported in part by grant  from the US National Science Foundation, DMS-1600693 and the 
Australian Research Council ARC DP160100153. 
Yumeng Ou is supported in part by the National Science Foundation under the grant   NSF-DMS-1854148.}
\begin{document}

	\begin{abstract} We prove  endpoint-type sparse bounds for Walsh-Fourier Mar\-cin\-kie\-wicz multipliers and Littlewood-Paley square functions. These results are motivated by  work of Lerner in the Fourier setting.  As a corollary, we obtain novel quantitative weighted norm inequalities for these operators. Among these, we establish   the sharp growth rate of the $L^p$ weighted operator norm in terms of the $A_p$ characteristic in the full range $1<p<\infty$ for    Walsh-Littlewood-Paley square functions, and a restricted range for Marcinkiewicz multipliers.  
	Zygmund's $L{(\log L)^{{\frac12}}}$ inequality  is the core of our   lacunary multi-frequency projection proof.  We use the Walsh setting to avoid extra complications in the arguments. 
	\end{abstract} 

		\maketitle
		\setcounter{tocdepth}{1}
		\tableofcontents 
	
\section{Introduction} 
We establish endpoint sparse bounds for   Walsh-Fourier multipliers of limited smoothness.
A recent article of Andrei Lerner \cite{180306981} poses two interesting conjectures concerning sparse bounds 
for the classical Littlewood-Paley inequality and Marcinkiewicz multipliers.  
We recall these here. For a Fourier multiplier $ m \;:\; \mathbb R \mapsto \mathbb R $, let 
\begin{equation*}
T_m f (x) = \int e ^{2 \pi i x \xi } \widehat f (\xi ) m (\xi ) \; d \xi  
\end{equation*}
be the associated linear operator.  
If $ m = \mathbf 1_{I}$, for interval $ I$, we write $ S _{I} = T _{\mathbf 1_{I}}$. 
The multiplier $ m$ is said to be \emph{Marcinkiewicz} if 
\begin{equation}\label{e:Mw}
\lVert m\rVert _{M} :=   \lVert m \rVert _{\infty } + 
\sup _{j} \lVert  m \mathbf 1_{ 2 ^{j} \leq \lvert \xi   \rvert < 2 ^{j+1} }\rVert _{BV} < \infty . 
\end{equation}
Define a  Littlewood-Paley square function by  
\begin{equation*}
(S_\lambda f) ^2 := \sum_{k \in \mathbb Z } \lvert  S _{    [ \lambda ^{k},  \lambda  ^{k+1})} f \rvert ^2 , \qquad \lambda >1. 
\end{equation*}

We are concerned with \emph{sparse bounds}, a recently active line of research  \cites{160305317,MR3625108}, which provide a stronger localized quantification of $L^p$-boundedness properties of maximal and singular integral operators.
For an interval $ I$, and index $ 0< p < \infty $, let 
\begin{equation*}
\langle  f \rangle _{I}  :=  \int _{I} \lvert  f (x)\rvert   \;  \frac{d x}{ \lvert  I\rvert }, \qquad 
\langle  f \rangle _{I,p}  := \langle  |f|^p \rangle _{I}^{\frac1p} .   
\end{equation*}  
We will also use local Orlicz norms, principally $ L(\log L)^{{\frac12}}$.  
Thus, for   convex increasing  $ \psi:[0,\infty) \to [0,\infty)  $
\begin{equation*}
\langle f \rangle _{I, \psi (L)} = \lVert  f \mathbf 1_{I}\rVert _{\psi (L) \left(I, \frac{dx}{\lvert  I\rvert }\right)}.  
\end{equation*}
The most important example for us is $ \psi _2 (x) =  \lvert  x\rvert  (\log (2 +  \lvert  x\rvert )) ^{1/2} $, which is the Orlicz function which defines $ L (\log L) ^{1/2} $. We will also reference the dual space $ \textup{exp} (L ^2 )$. 
 It is a useful remark that 
\begin{equation}\label{e:supinf}
\lVert \phi \rVert _{\textup{exp }(L ^2 )} 
\simeq \sup _{1< p < \infty } p ^{-1/2} \lVert \phi \rVert_p, \qquad 
\lVert f \rVert _{\psi _2}  \lesssim \inf _{1< q < 2} (q-1) ^{-1/2} \lVert f\rVert_q.  
\end{equation}

A collection of intervals $ \mathcal S$ are said to be \emph{sparse} if 
there is a secondary collection of pairwise disjoint 
sets $ \{F_I \;:\; I\in \mathcal S\}$ with $ F_I \subset I$,  and 
$ \lvert  F_I\rvert > c \lvert  I\rvert  $.  Here $ 0< c < 1$ is a constant. If $\mathcal S$ is a subset of a dyadic grid,  a sufficient condition for sparseness is that 
\begin{equation}
\label{e:disjset}
\left|E_I :=\bigcup\{J \in \mathcal S: J \subsetneq I\} \right| \leq (1-c)|I| \qquad \forall I \in \mathcal I.
\end{equation}
The role of the constant $ c$ is not important, and so we suppress it, though it might change from time to time in the proofs.  

\begin{definition}\label{d:sparse}[Sparse Bounds] 
We recall the basic notions of  $(p,q)$-sparse norms, see for instance \cite{CoCuDPOu,MR3647935}, and introduce similar concepts in the Orlicz space  setting.
\vskip 2mm \noindent
\textbf{1.} Given a sublinear operator $ T$, and indices $ 1\leq p, q \leq \infty $, we define the $(p,q)$-\emph{sparse norm} $ \lVert T \rVert _{p,q}$ to be the 
infimum over constants $ C$ so that for all bounded compactly supported $ f, g$, there holds 
\begin{equation}  \label{e:sparseDef}
\langle T f, g \rangle \leq C \sup _{\mathcal S}\sum_{I\in \mathcal S} \lvert  I\rvert 
\langle f \rangle _{I,  p} \langle g \rangle _{I,q}.   
\end{equation}
The notation $ \lVert T \rVert _{\psi _2 , q}$ will stand for the same norm, but with $ \langle f \rangle _{I, p}$ above replaced by $ \langle f\rangle _{I, \psi _2}$.  
\vskip 2mm \noindent
\textbf{2.} We define inhomogeneous sparse bounds by setting $ \lVert T \rVert _{r,p,q}$ to be the 
infimum over constants $ C$ so that for all bounded compactly supported $ f, g$, there holds 
\begin{equation}  \label{e:sparseDefDef}
\langle (T f) ^{r}, g \rangle \leq C \sup _{\mathcal S}\sum_{I\in \mathcal S} \lvert  I\rvert 
\langle f \rangle _{I,  p} ^{r} \langle g \rangle _{I,q}.   
\end{equation}
These inhomogeneous sparse bounds will be used to estimate the square function $S_\lambda$.

\end{definition}

\begin{conjecture}\label{j:lerner}  \cite{180306981}*{\S5.2}  The following sparse norm estimates hold true:\begin{align}\label{e:Sconj}
& \lVert S_ \lambda \rVert _{\psi _2 , 1} \lesssim 1,
\\  \label{e:Trr} &
\lVert T_m \rVert _{q,q}  \lesssim \frac {\lVert m\rVert _{M}}{\sqrt {q-1}} , \qquad 1< q < 2.  
\end{align}                                       
Here, we are using the norm of the Marcinkie\-wicz multiplier, as defined in \eqref{e:Mw}. 
\end{conjecture}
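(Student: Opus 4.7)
The plan is to run a two-level stopping-time scheme that produces the sparse form, and then apply Zygmund's lacunary inequality as the cell-level estimate. I focus on \eqref{e:Sconj} for $S_\lambda$; \eqref{e:Trr} will follow by decomposing Marcinkiewicz symbols into superpositions of frequency cutoffs and combining with \eqref{e:Sconj}.

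\textbf{Sparse extraction.} Given bounded compactly supported $f,g$ and a dyadic grid, I construct $\mathcal{S}$ recursively: in a top interval $I_0$ select the maximal $Q \subsetneq I_0$ for which $\langle f \rangle_{Q,\psi_2} > C \langle f \rangle_{I_0, \psi_2}$ or $\langle g \rangle_Q > C \langle g \rangle_{I_0}$, then iterate inside each selected $Q$. For a suitable absolute $C$, the Carleson measure estimates -- the weak-$(1,1)$ bound for the Hardy--Littlewood maximal function on the $g$-side, and the $L^p$ boundedness of the $\psi_2$-Orlicz maximal function on the $f$-side -- yield \eqref{e:disjset} and hence sparseness.

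\textbf{Cellwise estimate.} With $f_I, g_I$ the restrictions of $f,g$ to the part of the stopping interval $I$ not covered by its descendants in $\mathcal{S}$, the reduction to the sparse form is
\begin{equation*}
|\langle S_\lambda f_I, g_I\rangle| \lesssim \langle f \rangle_{I, \psi_2} \langle g \rangle_I |I|, \qquad I \in \mathcal{S}.
\end{equation*}
I dualize the $\ell^2$ inside $S_\lambda$ against $\{h_k\}$ with $\sum_k |h_k|^2 \leq 1$, reducing to $\sum_k \langle S_{I_k} f_I, h_k g_I \rangle$. Since the Fourier supports of the $S_{I_k} f_I$ sit in the disjoint lacunary bands $I_k = [\lambda^k, \lambda^{k+1})$, Zygmund's lacunary inequality -- in the dual form $\|\cdot\|_{\exp L^2} \simeq \sup_q q^{-1/2} \|\cdot\|_q$ of \eqref{e:supinf} -- shows that the modulated family $\{h_k g_I\}$ assembles into an $\exp(L^2)$ object controlled by $\|(\sum_k |h_k g_I|^2)^{1/2}\|_2 \leq \|g_I\|_2$. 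Pairing against $f_I$ through the $\psi_2$--$\exp(L^2)$ duality delivers the local bound with the sharp constant.

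\textbf{Multiplier step.} For a Marcinkiewicz symbol $m$, set $m_j = m \mathbf{1}_{[2^j, 2^{j+1})}$; the BV hypothesis supplies the representation $m_j = m_j(2^j) \mathbf{1}_{I_j} + \int m'_j(s)\, \mathbf{1}_{(s, 2^{j+1})}\, ds$ with $\int |m'_j|\leq \lVert m\rVert_M$. Thus $T_m = \sum_j T_{m_j}$ is a BV-weighted superposition of frequency projections on sub-bands of the lacunary family $\{I_j\}$. A Cauchy--Schwarz application in $j$ majorizes the resulting bilinear form by $S_\lambda$ pairings, and the sharp $(q-1)^{-1/2}$ factor is harvested from \eqref{e:Sconj} through the second inequality in \eqref{e:supinf}.

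\textbf{Main obstacle.} The genuine difficulty -- and the reason Lerner stated \eqref{e:Sconj}--\eqref{e:Trr} as conjectures rather than theorems -- is physical-space localization. The $S_{I_k}$ are Hilbert-transform-type operators, so restricting $f$ to an interval $I$ inside the stopping scheme creates non-local error tails $S_{I_k}(f\mathbf{1}_I) - (S_{I_k} f)\mathbf{1}_I$ with only polynomial decay. These tails must be absorbed without breaking the lacunary independence on which Zygmund's inequality rests, probably through a phase-space (Carleson--Fefferman) model, a bootstrap in which tail errors at one generation are dominated by principal contributions of ancestors, or a Bony paraproduct decomposition separating smooth and high-frequency pieces. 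Reconciling these Schwartz-tail maneuvers with the sharp $\psi_2$ average on the $f$-side -- which has no slack to absorb even logarithmic losses -- is the crux, and is precisely what the Walsh model in this paper sidesteps.
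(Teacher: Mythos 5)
This statement is a \emph{conjecture} in the paper (due to Lerner), and the paper does not prove it; it proves Walsh analogues, and in fact it \emph{disproves} the second half. Your proposal cannot be repaired in its multiplier step, because \eqref{e:Trr} is false: Proposition~\ref{p:lerner} in Section~\ref{s:lower} exhibits a Marcinkiewicz multiplier (the lacunary projection onto a Rademacher sequence) and functions $f=2^n\mathbf 1_{[0,2^{-n})}$, $g$ the indicator of the set where $\sum_{k\le n}w_{2^k}=-n$, for which $\lvert\langle T_mf,g\rangle\rvert\simeq n2^{-n}$ while every $(q,q)$-sparse form is $\simeq 2^{-n}$ when $q-1\simeq 1/n$; hence $\lVert T_m\rVert_{q,q}\gtrsim (q-1)^{-1}$, and the correct upper bound is \eqref{e:q-1}, with rate $(q-1)^{-1}$, not $(q-1)^{-1/2}$. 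The error in your ``multiplier step'' is the bookkeeping of Orlicz versus $L^q$ averages: the square-function input \eqref{e:Sconj} produces a $\psi_2$ average of $f$, and converting $\langle f\rangle_{I,\psi_2}$ into $\langle f\rangle_{I,q}$ via \eqref{e:supinf} costs an additional factor $(q-1)^{-1/2}$ that your Cauchy--Schwarz-in-$j$ argument silently drops; the example above shows this loss is genuine and not an artifact of the method.

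Even for \eqref{e:Sconj}, the half that is expected to be true (its Walsh analogue is \eqref{eq:Sm} with $r=1$), the proposal has two gaps. First, your cell-level estimate dualizes against $\{h_kg_I\}$ and ends up controlled by $\lVert g_I\rVert_2$, but the sparse form only affords $\langle g\rangle_{I,1}$; the dual Zygmund bound in \eqref{e:supinf} applies to lacunary sums with \emph{constant} coefficients and does not by itself give $\lvert\sum_k\langle S_{I_k}f_I,h_kg_I\rangle\rvert\lesssim\langle f\rangle_{I,\psi_2}\langle g\rangle_{I,1}\lvert I\rvert$. The paper's route is different: the multi-frequency decomposition of Lemma~\ref{l:key} splits $f$ into a good part $\tilde f$ with $\langle\tilde f\rangle_{I_0,2}\lesssim\sqrt J\,\langle f\rangle_{I_0,\psi_2}$ (this is where Zygmund enters) plus localized parts $f_I'$ satisfying \eqref{e:Th}, and the pairing with the bad part of $g$ is killed by the fact that $S_\Omega\tilde f$ is \emph{constant} on the stopping intervals --- an exact Walsh localization property (cf. \eqref{e:00}, \eqref{e:signWalsh}) with no direct Fourier counterpart. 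Second, your ``Main obstacle'' paragraph concedes exactly this point: the treatment of the non-local tails of the Fourier projections is left as ``probably'' handled by a phase-space model or bootstrap, i.e., it is not proved. So as written the proposal establishes neither estimate: \eqref{e:Sconj} is left with its crux unresolved (and is only proved in the Walsh model in this paper), and \eqref{e:Trr} is false as stated.
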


Andrei Lerner does not label these as conjectures, but suggests that they might be true: we have labeled them as conjectures for reasons of clarity. 

\smallskip 

In this paper, we are concerned with the direct analogues of Marcinkiewicz multipliers and Littlewood-Paley  square functions
in the Walsh-Fourier setting, with the formal definitions delayed to the next section.
We establish  a wide scope of sparse domination estimates, which in particular  include the Walsh analogue of  
the conjectured \eqref{e:Sconj}---\eqref{e:Trr}. 
The Walsh-Fourier setting arises naturally as a model case, as  it preserves the essential difficulties of the Fourier case without some of the purely technical difficulties proper of the latter; in particular,   Schwartz tails phenomena are absent.  
It is often the case that the right proof in the Walsh setting can be transferred to the Fourier case: this will be the object of forthcoming work.   See for instance \cites{MR2914604,MR2997005,MR2199086,DPLer2013} for closely related results.

To state and prove our main result, it is in fact convenient to   work with a more general scale of multiplier classes than the Marcinkiewicz  class. In fact, the closely related  classes $R_{p}$ have already been featured in the  characterization of the weak type endpoint behavior of Fourier-Marcinkiewicz multipliers due to Tao and Wright \cite{MR1900894}.
\begin{definition}\label{d:jumps}
 Let $ J\in \mathbb N$.
We say that $m\in \mathbb R^{\mathbb N}$ is  \emph{an $ R _{p,1}$-atom of at most $ J$ jumps}  if 
\begin{equation} \label{e:atom}
m (n) =  J^{- \frac{1}p} \sum_{k=1} ^{\infty }m_k (n) , \qquad m_k (n) =\sum_{j=1} ^{J_k} \mathbf 1_{\omega_{j,k}}(n) \qquad n \in \mathbb N
\end{equation}
where $ \{\omega_ {j,k}  \;:\; 1\leq j \leq J_{k}\}$ are disjoint intervals   contained in $ [ 2 ^{k-1}, 2 ^{k})$ and $ J_k\leq J$ for all $k \in \mathbb N$. 
We set $ \lVert m \rVert _{R _{q,1}}=1$.  
 We denote by $ R_{p,1}$  the atomic space generated by $ R _{p,1}$-atoms of at most $J$ jumps, with $J$ ranging over $\mathbb N$. 
\end{definition}

An elementary argument shows that $ \lVert m\rVert _{M} \simeq \lVert m \rVert _{R _{1,1}}$.   We will prove this in Proposition~\ref{p:M} below. 
By virtue of this remark, the classes $R_{q,1}$ appear as the natural multiplier scale in our main Theorem below.
We are primarily interested in Walsh versions of the Lerner conjectures recalled above. 
We also prove novel sparse bounds for $ R _{q,1} $ multipliers. 
Our theorem can also be interpreted as a Walsh and sparse variant of a weak type result proved by Seeger and Tao \cite{MR1839769} and further refined by  Tao and Wright \cite{MR1900894}.

\begin{theorem}\label{t:main} 
The following hold.  Let $1\leq q\leq2$,  $m\in R_{q,1}$ and $T_m$ be the corresponding Walsh multiplier operator, 
and $ S _{\lambda }$ the Littlewood-Paley square function, both  defined in Section \ref{s:2} below.  Recall the notation from \eqref{e:sparseDef}.   These sparse bounds hold: 
\begin{align}
\label{eq:Sm}&
\lVert S_\lambda   \rVert_{r, \psi_2,1}  \lesssim 1  ,  \qquad 1\leq r \leq 2, 
\\ &
\label{eq:ST}
\lVert S_2\circ T_m   \rVert_{r, \psi_2,1}  \lesssim  \lVert m\rVert _{M} ,  \qquad 1\leq r \leq 2, 
\\ & \label{e:Tq}
\lVert T _{m}\rVert _{\psi _2, q}   \lesssim \tfrac{\|m\|_{R_{q,1}}  }  {\sqrt {q-1}}, \qquad 1 < q \leq 2, 
\end{align}
In the reverse direction, there is an Marcinkie\-wicz multiplier $ m$ so that 
\begin{equation}\label{e:reverse}
\lVert T _{m}\rVert _{q,q} \gtrsim \frac{1} {q-1}, \qquad 1< q < 2. 
\end{equation}

\end{theorem}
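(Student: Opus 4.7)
The plan is to establish the three positive sparse estimates \eqref{eq:Sm}, \eqref{eq:ST}, \eqref{e:Tq} via a Calder\'on--Zygmund stopping-time reduction to a \emph{single-scale principal estimate}, and to obtain the reverse bound \eqref{e:reverse} by testing the dualized sparse form against a power weight. The analytic heart of the argument is Zygmund's $L(\log L)^{\frac12}$ inequality applied to sums of lacunary Walsh multi-frequency projections, as advertised in the abstract.

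\textbf{Stopping-time step and principal estimate for \eqref{eq:Sm}.} Fix a Walsh-dyadic interval $I_0$ and $f,g$ supported on $I_0$. Declare the stopping children of $I_0$ to be the maximal dyadic $J\subsetneq I_0$ with either $\langle f\rangle_{J,\psi_2}>C\langle f\rangle_{I_0,\psi_2}$ or $\langle g\rangle_{J,1}>C\langle g\rangle_{I_0,1}$; the weak-type estimate for the $\psi_2$-maximal operator makes their union a set of measure at most $\tfrac12|I_0|$, verifying \eqref{e:disjset}. Iterating produces the sparse family, and the sparse inequality reduces to
\[
\int_{E_{I_0}} (S_\lambda f)^r g\,dx \lesssim |I_0|\,\langle f\rangle_{I_0,\psi_2}^{\,r}\,\langle g\rangle_{I_0,1}.
\]
The decisive feature of the Walsh model is that the projections $S_{[\lambda^k,\lambda^{k+1})}$ with $\lambda^k\gtrsim|I_0|^{-1}$ restrict cleanly to $I_0$ with no Schwartz-tail error; so after expanding $(S_\lambda f)^2=\sum_k|S_{[\lambda^k,\lambda^{k+1})}f|^2$ and truncating the outer sum at the $I_0$-scale, the estimate becomes equivalent to the uniform-in-$\lambda$ bound $\|S_\lambda h\|_r\lesssim|I_0|^{1/r}\|h\|_{\psi_2}$ for $h$ supported on $I_0$, which is exactly Zygmund's $L(\log L)^{\frac12}$ inequality for lacunary Walsh multi-frequency projections.

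\textbf{Passage to \eqref{eq:ST} and \eqref{e:Tq}.} For \eqref{eq:ST}, the Marcinkiewicz hypothesis \eqref{e:Mw} provides, on each dyadic Walsh block $[2^k,2^{k+1})$, a bounded-variation representation of $m$ as a superposition of indicators of subintervals with total mass $\lesssim\|m\|_M$; composing with $S_2$ and invoking \eqref{eq:Sm} block-by-block via the orthogonality of distinct Walsh blocks finishes the argument. For \eqref{e:Tq}, I would invoke the $R_{q,1}$-atomic decomposition (Definition \ref{d:jumps}) and treat each atom as a Walsh multi-frequency projection of at most $J$ jumps per block: a Bourgain--Zygmund-style $\psi_2\to\exp(L^2)$ multi-frequency projection bound, converted to the $L^q$-side by the second inequality of \eqref{e:supinf}, supplies the sharp factor $(q-1)^{-1/2}$, with the atomic normalization $J^{-1/q}$ exactly matched to the $J$-dependent growth of the multi-frequency operator norm. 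Summability over atoms is then immediate from $\|m\|_{R_{q,1}}$.

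\textbf{Main obstacle and the reverse bound \eqref{e:reverse}.} The hardest step is obtaining the lacunary multi-frequency projection estimate with the sharp $\psi_2$ norm on the right-hand side: the $L(\log L)^{\frac12}$ endpoint is tight, and extracting it requires a careful Khintchine--Zygmund argument exploiting the clean orthogonality of Walsh packets on disjoint dyadic blocks (this is where the Walsh setting avoids the Fourier-case Schwartz-tail complications). For \eqref{e:reverse}, I would choose $m$ as a superposition of indicators of nested lacunary intervals, so that $T_m$ contains a Walsh-Hilbert transform at each scale; testing the $(q,q)$-sparse inequality against $f,g$ adapted to Buckley's sharp $A_q$ example $w(x)=|x|^{(q-1)(1-\varepsilon)}$ and comparing with the sharp weighted lower bound $\|T_m\|_{L^q(w)}\gtrsim[w]_{A_q}^{1/(q-1)}$ forces $\|T_m\|_{q,q}\gtrsim(q-1)^{-1}$, as claimed.
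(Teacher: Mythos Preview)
Your stopping-time framework is correct in outline, but the principal estimate you reduce to for \eqref{eq:Sm} is false as stated, and the gap is exactly the core of the paper's argument. You claim the sparse recursion reduces to the norm inequality $\|S_\lambda h\|_r \lesssim |I_0|^{1/r}\|h\|_{\psi_2}$ for $h$ supported on $I_0$, and identify this with Zygmund's inequality. But Zygmund's inequality \eqref{e:Z} only controls $\|\{\widehat f(\lambda_k)\}\|_{\ell^2}$ --- single lacunary Walsh coefficients --- not the full square of the projections $T_{[\lambda^k,\lambda^{k+1})}f$ onto lacunary \emph{intervals}. The square function $S_\lambda$ does not map $L(\log L)^{1/2}$ to $L^r$ boundedly for any $r\ge1$; if it did, sparse domination would be unnecessary. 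The same misidentification undercuts your sketch for \eqref{e:Tq}: the ``Bourgain--Zygmund multi-frequency projection bound'' you invoke does not apply to $T_m f$ as a whole.

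What the paper actually does (its Key Lemma~\ref{l:key}) is a multi-frequency Calder\'on--Zygmund decomposition: on each stopping interval $I$, one splits $f\mathbf 1_I=f_I+f'_I$, where $f_I$ is the projection onto the finitely many wave packets in $\mathbf P(I)$ whose frequency intervals contain a jump of $m$, and $f'_I$ is the remainder. Zygmund's inequality enters, in its dual $\exp(L^2)$ form via Chang--Wilson--Wolff, to give $\langle f_I\rangle_{I,2}\lesssim\sqrt J\,\langle f\rangle_{I,\psi_2}$; this is where the $\psi_2$ norm appears, and it controls only the finite-rank piece $f_I$, not all of $f\mathbf 1_I$. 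The good function $\tilde f=f_\infty+\sum_I f_I$ then satisfies $\langle\tilde f\rangle_{I_0,2}\lesssim\sqrt J\,\langle f\rangle_{I_0,\psi_2}$, and one applies the trivial $L^2$ bound for $T_m$ (with norm $J^{-1/q}$) or $S_\Omega$ to $\tilde f$; for the square function one also needs the observation that $S_\Omega\tilde f$ is \emph{constant} on each stopping interval $I$, which is what makes the local $L^2$ control sufficient. The bad pieces $f'_I$ satisfy the exact localization $T_m f'_I=T_{m_I}f'_I$ (respectively $S_\Omega f'_I=S_{\Omega_I}f'_I$) with the induced multiplier $m_I$ of \eqref{e:mI}, and this is what feeds the recursion. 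Your sketch never isolates this finite-rank projection step, so Zygmund's input has nowhere to land, and you have no mechanism to recurse on the remainder.

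For \eqref{e:reverse}, your weighted-testing route via Buckley's power weight is plausible but indirect; the paper gives a direct example: $m=\sum_k\mathbf 1_{\{2^k\}}$, $f=2^n\mathbf 1_{[0,2^{-n})}$, $g=\mathbf 1_{\{T_mf=-n\}}$, with $q-1\simeq 1/n$. Since $f$ and $g$ sit at opposite ends of $[0,1]$, the only sparse interval that sees both is $[0,1]$ itself, and a one-line computation gives $|\langle T_mf,g\rangle|\gtrsim (q-1)^{-1}\|f\|_q\|g\|_q$.
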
 
 
 
Notice that the sparse bounds \eqref{eq:Sm} include  the conjecture of Lerner ($ r=1$), but there are a full range of interesting sparse bounds for the square function. 
Taking  $ r = \frac{3}2$  will yield the sharp $ A_p$ inequalities for the square function. 
The bound for $ S _{2} \circ T_m$ is an inhomogeneous sparse bound for the composition of the Haar square function $ S_2$ composed with a Marcinkie\-wicz multiplier.  
We were inspired to seek for such sparse estimate in light of Lerner's arguments \cite{180306981}.   

The conjectured inequality \eqref{e:Trr} for multipliers is however false. The correct version of the inequality is 
\begin{equation} \label{e:q-1}
\lVert T_m \rVert _{q,q}  \lesssim \frac {\lVert m\rVert _{M}}{ {q-1}} , \qquad 1< q < 2.
\end{equation}
This is a corollary to \eqref{e:Tq} and \eqref{e:supinf}.  And the rate of growth in $ q$ is sharp, in view of \eqref{e:reverse}.

The method of proof of the sparse bounds depends upon a multi-frequency decomposition of the multipliers. 
The core of this argument is in \S \ref{s:core}.  It is the main point of interest in this paper.  
The proof of Theorem \ref{t:main} descends from this decomposition via iterative arguments: in particular \S \ref{s:4} contains the proof of \eqref{e:Tq}, \S \ref{s:6} is devoted to the square function bounds \eqref{eq:Sm} and \eqref{eq:ST}. 
The lower bound \eqref{e:reverse} is described in \S \ref{s:lower}.

\bigskip 

As customary in the subject,  quantitative weighted norm inequalities descend from sparse norm estimates. We detail the   consequences of the estimates from Theorem \ref{t:main} in the next corollary: compare with the discussion in Lerner \cite{180306981}.  We use the language of $ A_p$ weights and postpone  definitions and proofs to Section  \ref{s:weight}.

\begin{corollary}\label{c:lerner} For the operators $ T_m$ and $ S _{\lambda }$ on the Walsh system, and weights $ w$, there holds  for $ 1<q < 2$, 
\begin{align}    &\label{e:cP2}
\lVert T _{m}\rVert _{L ^{1}(\log L ) ^{1/2} (w)  \to L ^{1, \infty } (w)} < C _{[w] _{A_1}} \lVert m\rVert _{M}, 
\\  &\label{e:cLp}
\lVert T _{m}\rVert _{L ^{p} (w) \to L ^{p} (w)} \lesssim \lVert m \rVert _{M} [w] _{A_p} ^{\frac{3}2 \max \{1, (p-1) ^{-1}\}}, \qquad  \max\{p, p'\}\geq \tfrac{5}2 , 
\\ & \label{e:Tqq}
\lVert T _{m}\rVert _{L ^{p}  (w) \to L ^{p} (w)} <  C_{[w] _{ A_ {p/q}}, [w] _{RH _{ (q'/p)'}} }  \lVert m\rVert _{R _{q,1}}
\qquad   q < p < q', 
\\ &\label{e:cS}
\lVert  S _{\lambda }\rVert _{L ^{p} (w) \to L ^{p} (w)} \lesssim   [w] _{A_p} ^{  \max \bigl\{1 , \frac{3} {2 (p-1)}\bigr\}}  , \qquad 1< p < \infty . 
\end{align}
\end{corollary}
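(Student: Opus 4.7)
The plan is to derive all four inequalities in Corollary~\ref{c:lerner} as consequences of the sparse bounds established in Theorem~\ref{t:main}, via the now-standard machinery that converts a sparse domination estimate into a quantitative weighted norm inequality. The central ingredient is the fact that, for a sparse collection $\mathcal{S}$ and a weight $w$, the sparse form
\[
\sum_{I \in \mathcal{S}} |I|\, \langle f \rangle_{I,p_0}\, \langle g \rangle_{I,q_0}
\]
admits a bound on $L^p(w) \times L^{p'}(w^{1-p'})$ in the range $p_0 < p < q_0'$, with quantitative dependence on $[w]_{A_{p/p_0}}$ and $[w]_{RH_{(q_0'/p)'}}$ dictated by the Hytönen--Pérez sharp optimization. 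The Orlicz variant, where $\langle f \rangle_{I,p_0}$ is replaced by $\langle f \rangle_{I,\psi_2}$, is treated by reducing $\psi_2$ to an $L^s$ average with $s>1$ via \eqref{e:supinf}, at the cost of a factor $(s-1)^{-1/2}$.

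For the strong-type multiplier estimates \eqref{e:cLp} and \eqref{e:Tqq}, I would apply this transference directly to the $(\psi_2,q)$-sparse bound \eqref{e:Tq}. The power of $[w]_{A_{p/q}}$ follows the standard $\max\{1,\,1/(p/q-1)\}$ rule, and combining it with the $(q-1)^{-1/2}$ from \eqref{e:Tq} yields \eqref{e:Tqq} verbatim. The bound \eqref{e:cLp} is then the limit case $q\downarrow 1$, valid precisely in the regime $\max\{p,p'\}\ge 5/2$ where the optimization over $q \in (1,2]$ falls in the admissible range; the exponent $\tfrac{3}{2} = 1 + \tfrac{1}{2}$ records the sum of the standard $A_p$ power and the $(s-1)^{-1/2}$ penalty coming from the Orlicz reduction.

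The weak-type endpoint \eqref{e:cP2} on $A_1$ weights is obtained by applying the sparse-to-weak-$L^1$ passage, in the style of Lerner--Nazarov--Ombrosi, to the $(\psi_2,2)$-sparse bound arising from \eqref{e:Tq} at $q=2$; the $L(\log L)^{1/2}$ data modulus matches the Orlicz $\psi_2$ average in the sparse form by design. For the Littlewood--Paley inequality \eqref{e:cS}, the full range $1 < p < \infty$ with the sharp exponent is accessed through the inhomogeneous sparse bound \eqref{eq:Sm}, whose free parameter $r \in [1,2]$ is tuned as a function of $p$: one writes $\|S_\lambda f\|_{L^p(w)}^p = \int (S_\lambda f)^p w$ and dualizes the $r$-th power against $L^{(p/r)'}(w)$, converting \eqref{eq:Sm} into a standard two-weight sparse form. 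Choosing $r$ close to $1$ for large $p$ and close to $2$ for $p$ close to $1$ covers all of $(1,\infty)$ and gives the exponent $\max\{1,\,3/(2(p-1))\}$.

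The main technical obstacle is extracting sharp constants through the Orlicz $\psi_2$ average. In the argument for \eqref{e:cS}, the parameter $s$ in the $\psi_2$-to-$L^s$ reduction via \eqref{e:supinf}, the inhomogeneous degree $r$ of \eqref{eq:Sm}, and the reverse Hölder self-improvement of $w \in A_p$ must all be balanced simultaneously to produce the sharp exponent $\max\{1,\,3/(2(p-1))\}$ rather than a strictly larger one. This bookkeeping parallels Lerner's sharp $A_p$ analysis of the classical Fourier square function and is expected to be the most delicate step in the corollary's proof.
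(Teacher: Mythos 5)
Your overall framework — converting the sparse bounds of Theorem~\ref{t:main} into quantitative weighted estimates via the Bernicot--Frey--Petermichl transference and its Orlicz refinements — is the right one, and for \eqref{e:Tqq} it matches the paper's proof exactly (the paper simply cites \cite{MR3531367}*{Prop.~6.4}). For the other three estimates, however, your choices diverge from the paper's in ways that are not purely cosmetic, and at least two of them hide genuine gaps.

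For \eqref{e:cP2}, you propose to apply the $(\psi_2,q)$-sparse bound \eqref{e:Tq} \emph{at $q=2$} and then run a Lerner--Nazarov--Ombrosi-style weak-type argument. This cannot work for general $A_1$ weights: the $g$-side of the sparse form carries an $L^q$ average of $gw$, and controlling $\langle gw\rangle_{I,q}$ by $w(I)/|I|$ requires a reverse H\"older inequality of order $q$. An $A_1$ weight is in $RH_{q_0}$ only for $q_0-1 \simeq [w]_{A_1}^{-1}$, which is close to $1$, not $2$. The paper accordingly applies \eqref{e:Tq} with $q=(1+q_0)/2$, $q_0-1\simeq [w]_{A_1}^{-1}$, pays the resulting $(q-1)^{-1/2}\simeq [w]_{A_1}^{1/2}$ cost, and then exploits the $A_\infty(w)$ property of $w^{q-1}$ to run a pigeonholing argument. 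Fixing $q=2$ throws away exactly the flexibility that makes the argument close.

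For \eqref{e:cLp}, you propose to go directly from \eqref{e:Tq} and claim the exponent $\tfrac32 = 1+\tfrac12$ as the sum of a ``standard $A_p$ power'' and an ``Orlicz penalty.'' That is not how the paper obtains the sharp exponent, and I do not believe the direct route yields $\tfrac32$ at $p=\tfrac52$: with a $(\psi_2,q)$-form one must keep $q<p'=\tfrac53$ to stay in the admissible range, and optimizing the interplay between $(q-1)^{-1/2}$ and the BFP exponent (which blows up as $q'\downarrow p$) produces a different-shaped bound. The paper's argument instead passes through Wilson's weighted square-function inequality, $\lVert T_m\rVert_{L^{5/2}(w)}\lesssim [w]_{A_{5/2}}^{1/2}\lVert S_2\circ T_m\rVert_{L^{5/2}(w)}$, and then invokes the sparse bound \eqref{eq:ST} for $S_2\circ T_m$, to which the square-function machinery of \eqref{e:cS} applies and gives $[w]_{A_{5/2}}^{1}$. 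The $\tfrac32$ thus decomposes as $\tfrac12$ (Wilson) plus $1$ (square-function weighted bound), not as you suggest — and this is precisely why Theorem~\ref{t:main} records the composite estimate \eqref{eq:ST}, which plays no role in your outline.

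For \eqref{e:cS}, you propose tuning $r\in[1,2]$ as a function of $p$ to cover $1<p<\infty$. The paper does not do this: it fixes $r=\tfrac32$ and proves the estimate only at the critical exponent $p=\tfrac52$ (where the coupling $p=r+1$ makes the sparse-to-weighted argument produce $[w]_{A_p}^{1}$, and the choice $q-1\simeq [w]_{A_p}^{-2/3}$ in the $\psi_2\to L^q$ reduction produces the extra $[w]_{A_p}^{1/2}$), then obtains the full range by extrapolation as in Lerner \cite{180306981}. Tuning $r$ alone cannot cover $(1,\infty)$: the coupling $p=r+1$ only reaches $p\in[2,3]$, and your prescription ``$r$ close to $1$ for large $p$'' breaks that coupling and would not yield the sharp exponent there. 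Extrapolation is not an optional clean-up; it is essential, and it should be named explicitly.

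In short, \eqref{e:Tqq} is fine, but \eqref{e:cP2} has a concrete error ($q=2$ instead of $q$ close to $1$), \eqref{e:cLp} needs the Wilson--square-function detour through \eqref{eq:ST} that you bypass, and \eqref{e:cS} needs extrapolation from the single exponent $p=\tfrac52$.
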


The square function inequalities  \eqref{e:cS} for $ 1< p < 2$ were established by Lerner \cite{180306981} 
in the Fourier case.   
Otherwise, the inequalities above are new, and sharp in the power of the $ A_p $ characteristic for the square function \eqref{e:cS} and the Marcinkie\-wicz multipliers \eqref{e:cLp}, as conjectured by Lerner.  But notice that in \eqref{e:cLp}, we require $ 1< p \leq  \tfrac{5}3$ or $ p \geq \tfrac{5}2$. 
We leave open the sharp dependence on $ A_p$ for $ \tfrac{5}3 < p < \tfrac{5}2$. 

The other inequalities complement un-weighted estimates of Tao and Wright \cite{MR1900894}.  All the estimates above can be made quantitative and, likewise, it is potentially interesting to detail weighted weak type estimates; we do not pursue these points in great detail here. 

We remark that the Fourier and Walsh cases can, in certain instances, diverge. For instance, the Square Function inequalities above 
in the case of $ \lambda =2$ are trivial, as in that case, the Square function is in fact the Haar, or dyadic martingale, square function. Much stronger inequalities are true in that case.  
\subsection*{Acknowledgments} This project was initiated during F.\ Di Plinio and Y.\ Ou's Spring 2018 visit to the Georgia Tech Mathematics Department, whose hospitality is gratefully acknowledged.
 The authors are grateful to Andrei Lerner for his insightful comments on the sparse estimates of Theorem \ref{t:main}.

\section{Walsh Analysis} \label{s:2}

In this section, we define the Walsh functions as well as  the Walsh analogues of the Fourier multiplier operators described in the introduction, and state some definitions that we will need for the proof.

\subsection{The Walsh system and Walsh multipliers}  The Walsh functions are the group characters of $ [0,1] \equiv \{0,1\} ^{\mathbb N }$, where $ \mathbb N = \{0,1, 2 , \ldots  \}$.    

To be explicit, define first the Walsh functions $ \{w _{2 ^{k}} \;:\; k=0,1 ,\ldots \}$
by 
\begin{equation*}
w _{2 ^{k}} (x) = \textup{sign} (\sin (2 ^{k+1} \pi x )), \qquad 0< x < 1. 
\end{equation*}
Then, $ \{w _{2 ^{k}} \}$ forms a Rademacher sequence.  
In addition, set  $ w _0 \equiv 1$.  
Extend this to all integers $ n \in \mathbb N $ by writing $ n = 2 ^{k_1} + \cdots + 2 ^{k_d}$ uniquely as a sum of distinct powers of $ 2$, and then define 
\begin{equation*}
w _{n} := w _{2 ^{k_1}} \cdot \cdots \cdot w _{2 ^{k_d}}. 
\end{equation*}
They satisfy this \emph{restricted product rule}: $ w _{m+n} = w _{m} w _{n}$ if $ n,m$ do not have a common non-zero binary digit. 
The Walsh functions  form an orthogonal basis for 
$ L ^{2} (0,1)$.  They are a discrete variant of the exponentials $ \{e ^{2 \pi i k x} \;:\; k = 0 ,1 ,\ldots \}$, and so we will write 
\begin{equation*}
f = \sum_{n=0} ^{\infty } \widehat f (n) w_n , \qquad \widehat f (n) = \langle f, w_n \rangle.   
\end{equation*}
The reader can note that several properties of Walsh functions recalled below relate to precise Fourier localization, and have proper analog in the Fourier basis.  

To a bounded function $m:  \mathbb N \to \mathbb R $ we associate a  \emph{Walsh multiplier} by 
\begin{equation*}
T _{m} f = \sum_{n \in \mathbb N } m (n) \langle f, w _{n} \rangle w _n .  
\end{equation*}
If $ m = \mathbf 1_{\omega } $, for an interval $ \omega $, we understand that 
$ \omega = [\alpha , \beta )$ or $ \omega = [\alpha ,\beta ]$, for integers $ \alpha , \beta \geq 0$, and write 
  $ T_ {\mathbf 1_{\omega }} = T _{\omega }$.  
The Walsh-Littlewood-Paley square function with integer parameter $\lambda \geq 2$ is then  defined  by
\begin{equation}
\label{e:lam}
S_\lambda f(x)^2= \lvert  \widehat f (0) \rvert ^2 + \sum_{k\in \mathbb N} |T_{[\lambda^{k-1},\lambda^{k})} f(x)|^2.
\end{equation}
Note that $ S_2$ is the usual Haar square function.

Each Rademacher function $ w _{2 ^{k}}$ is constant on dyadic intervals of length $ 2 ^{-k-1}$. 
From this it follows that  if $ n < \lvert  I\rvert ^{-1}  $, for dyadic interval $ I$, then $ w _{n}$ is constant on $ I$. 
Hence,  $ T _{ [0,n]}f$ is constant on $ I$.  But also note that 
\begin{equation} \label{e:CE} 
T _{ [0,2 ^{k})}f = \sum_{I \;:\; \lvert  I\rvert = 2 ^{-k} } \langle f \rangle_I \mathbf 1_{I}. 
\end{equation}
So that in this case we have the stronger localization property 
\begin{equation}\label{e:00}
\mathbf 1_{I}  T _{[0,2 ^{k})} (f \mathbf 1_{[0,1] \setminus I}) \equiv 0, \qquad \lvert  I\rvert = 2 ^{-k}.    
\end{equation}
This principle is an important fact for us.  

\subsection{Tiles} 

We will be discretizing Walsh multipliers by means of \emph{Walsh wave packets}, namely localizations of the Walsh characters to a dyadic spatial interval. The language of \emph{tiles} will describe the phase space regions associated to wave packets. 

Let  $ \mathcal D _0$ be the standard dyadic grid on $ [0,1]$, 
and $ \Omega _ 0 $ be the collection of dyadic subintervals $\omega \subset (0,\infty)$ of length $|\omega|\geq 1$.  We say that $ p = I_p \times \omega _p \in \mathcal D_0 \times \mathcal \Omega_0 $ is a \emph{tile} if $ \lvert  I_p\rvert \cdot \lvert  \omega_p \rvert =1  $.  In that case, we have 
\[
\omega _p = \textstyle \left[\frac{n}{\lvert  I_p\rvert} , \frac{n+1}{\lvert  I_p\rvert}\right) , \qquad n\in \mathbb N  
\]
and we define the \emph{wave packet associated to $ p$} to be 
\begin{equation*}
w _{p} (x) = \frac{1}{\lvert  I_p\rvert ^{1/2}} w_n \Bigl( \frac{x - \ell_{I_p}} {\lvert  I_p\rvert } \Bigr) \mathbf 1_{I_p} (x), 
\end{equation*}
where $ \ell_{I_p}$ is the left endpoint of $ I_p$.   Under this definition, it follows that for all intervals $ I\in \mathcal D_0$
\begin{equation*}
\mathbf P (I) := \{  w _p \;:\; I_P = I\}
\end{equation*}
is an orthonormal basis for $ L ^2 (I)$.  (In particular, if $ I=[0,1]$, we recover the Walsh basis.) 
A much deeper property \cite{MR2199086} is the following orthgonality property:  For any two tiles $ p, q$
\begin{equation*}
\langle w _p , w _{q} \rangle =0  \quad \textup{if and only if} \quad   p\cap q = \emptyset . 
\end{equation*}
We understand the intersection to be of two rectangles in $ [0,1) \times (0, \infty )$.  
This property leads to many simplifications in the Walsh case.

\subsection{Tiles and Haar Functions}
The Haar basis can be described as tile basis.  Namely, the Haar basis on $ L ^2 (0,1
)$ can be given as follows.  Set $ \chi _{[0,1)} = w _{0} = w _{[0,1] \times [0,1]}$, 
and for dyadic $ I\subset [0,1]$, observe that the classical Haar function is given by 
\begin{equation*}
h _{I} =  \frac {\mathbf 1_{I _{-}  } - \mathbf 1_{I _{+}}} {\sqrt {\lvert  I\rvert }} 
= w _{I\times\frac{1}{|I|}[1,2)  }.  
\end{equation*}
The tiles for the Haar system partition $ [0,1] \times [0, \infty )$, and 
the tiles for the Walsh and Haar systems can be visualized as in Figure~\ref{f:WH}. 

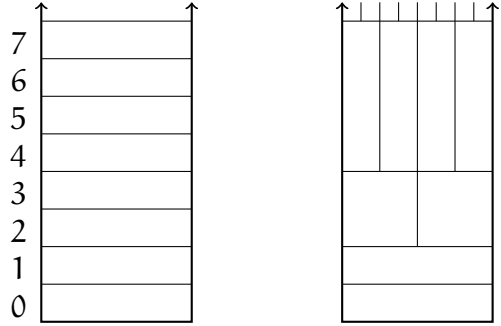
\begin{figure}[h!]
\centering
\begin{tikzpicture}[yscale=.5] 
\draw[<->,thick] (0, 8.5) -- (0,0) --  (2,0) -- (2,8.5); 
\foreach \x in {1,2,...,8} \draw (0,\x) -- (2,\x);
\foreach \x in {0,1,...,7} \draw (-.3,\x+1) node[below] {$    \x$};  

\begin{scope}[xshift=4cm] 
\draw[<->,thick] (0, 8.5) -- (0,0) --  (2,0) -- (2,8.5); 
\foreach \x in {1,2,4,8} \draw (0,\x) -- (2,\x); 
\draw (1,2) -- (1,4); 
\foreach \x in {.5,1,1.5} \draw (\x,4) -- (\x,8);
\foreach \x in {.25,.5,.75,1,1.25,1.5,1.75} \draw (\x,8) -- (\x,8.5);
\end{scope} 
\end{tikzpicture}
\caption{The tiles associated to the Walsh basis on the left, and the Haar basis on the right.
Note that the regions $ [0,1] \times [0, 2 ^{k})$ are tiled by both the Walsh and the Haar tiles, which is a reflection of the identity \eqref{e:CE}. 
} 
\label{f:WH}
\end{figure}

We record another useful fact, one that is basic to the analysis of our multipliers. 
Each Walsh projection $ T _{[0, n )}$ has an expansion in terms of tile operators.  Write $ n$ in binary, namely $n = \sum_{j=1} ^{k} 2 ^{n_j}$, for $ n_1 >  \cdots > n_k\geq 0$; we use \emph{decreasing order}.  
Let $ n ^{0}=0$,  $ n ^{1} = 2 ^{n_1} ,\dotsc,  n ^{k}=n$ be the partial sums of the binary expansion of $ n$, 
and let 
\begin{equation*}
\omega ^j = [n ^{j-1} , n ^{j}) ,\qquad   j=1 ,\dotsc, k. 
\end{equation*}
Each of these intervals are dyadic.  
Here are three examples:  For $ [0, 2 ^{t})$ we have $ \omega ^{1} = [0, 2 ^{t})$. For  $ [0,5)$, we have  $ 5 = 2 ^{2}+ 2 ^{0}$, so that 
$ \omega ^{1} = [0,4)$ and $ \omega ^2 = [4,5)$.  For $ [0,11)$, we have  $ 11 = 2 ^{3} + 2 ^{1} + 2 ^{0}$, so that 
$ \omega ^{1} = [0,8)$, $ \omega ^{2} = [8,10)$, and $ \omega ^{3} = [10,11)$, see Figure~\ref{f:WalshTile}.  

Setting  $ \mathbf{P}(n) = \{p\in \mathbf P:\omega_p=\omega ^j \textup{ for some } 1\leq j \leq k\}$, we have 
\begin{equation} \label{e:WalshTile}
T _{[0,n)}f = \sum_{p \in \mathbf{P}(n)} \langle f, w _p \rangle w_p. 
\end{equation}
This is illustrated in Figure~\ref{f:WalshTile}.   But, we have this further property, which follows from the definitions and routine computation: see  \cite[Lemma 2.1]{MR3084406} for the details.  For $ p = I \times [n ^{j-1}, n ^{j}) \in \Omega $, we have 
\begin{equation}\label{e:signWalsh}
w _{n} w _{p} = \sigma h _{I}, \qquad \sigma \in \{\pm1\}.  
\end{equation}
That is, up to a modulation, the tiles in \eqref{e:WalshTile} are in fact Haar functions.

\begin{figure}[t]\centering
\begin{tikzpicture}[yscale=.5] 
\draw[<->,thick] (0, 11.5) -- (0,0) --  (2,0) -- (2,11.5); 
\foreach \x in {0,1,...,11} \draw (0,\x) -- (2,\x);
\foreach \x in {0,1,...,11} \draw (-.3,\x+0.5) node[below] {$ {}_{\x}$};  

\begin{scope}[xshift=4cm] 
\draw[<->,thick] (0, 11.5) -- (0,0) --  (2,0) -- (2,11.5); 
 \foreach \x in {.25,.5,.75,1,1.25,1.5,1.75} \draw (\x,0) -- (\x,8);
 \foreach \y in {8 ,10 ,11}\draw (0,\y) -- (2,\y); 
  \draw (1,8) -- (1,10) ; 
 \foreach \w/\y/\a/\b in {1/4/0/8, 2/9/8/10, 3/10.5/10/11}  \draw (2.9,\y) node {$ {}_{\omega ^{\w} =[\a,\b)}$};  
\end{scope} 
\end{tikzpicture}
\caption{The Walsh Projection $  T_{[0,11)}$ as a sum of tiles, in two different ways, illustrating \eqref{e:WalshTile}. 
Note that $ 11 = 8 + 2 + 1$, and that the corresponding $ \omega ^{j}$ are indicated on the right.  
} 
\label{f:WalshTile}
\end{figure}
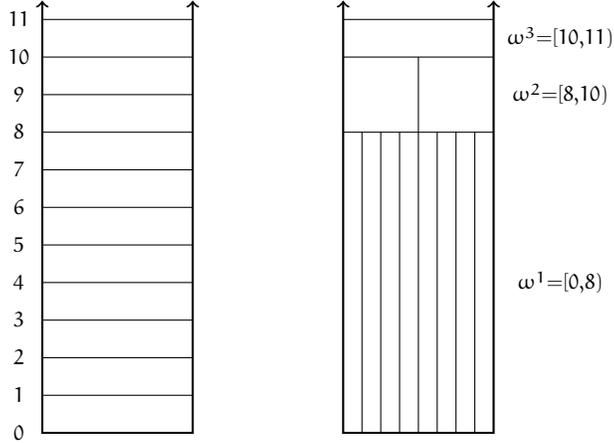

\subsection{Zygmund's Inequality}

A basic example of a Marcinkie\-wicz multiplier is 
\begin{equation*}
m (n) =\sum_{k=1} ^{\infty } \varepsilon _k \mathbf 1_{ \{\lambda _k\}} (n) ,  \qquad \varepsilon _k \in \{\pm 1\}.  
\end{equation*}
where $ \{\lambda _k\}$ is an increasing sequence of integers with $ \inf \{ \lambda _{k+1}/ \lambda _k\} >1$.  We refer to such sequences  $ \lambda _k$ as \emph{lacunary}.  
Observe that the uniform control of these multipliers from $ \Psi (L) $ to weak $ L ^{1}$ implies the stronger inequality 
\begin{equation*}
\lVert  \{\widehat f (\lambda _k) \}\rVert _{\ell ^2 } \lesssim \lVert f\rVert _{\Psi (L)}. 
\end{equation*}
Indeed, the core of our proof is to take the endpoint version of the above inequality, and lift it to the setting of more general multipliers.  This  endpoint version is  known as Zygmund's inequality.  

\begin{theorem}\label{t:z}  Let $ \{\lambda _k\}$ be lacunary integers. There holds 
\begin{equation}\label{e:Z}
\lVert  \{\widehat f (\lambda _k) \}\rVert _{\ell ^2 }\lesssim \lVert f\rVert _{L (\log L) ^{{\frac12}} }.
\end{equation}
\end{theorem}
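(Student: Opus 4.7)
My plan is to prove Zygmund's inequality by Orlicz duality, reducing to a moment bound for lacunary exponential sums. Since $L(\log L)^{1/2}$ and $\exp(L^2)$ are dual Orlicz spaces on $[0,1]$, and
\[
\sum_k \overline{a_k}\,\widehat f(\lambda_k) \;=\; \int_0^1 f(x)\,\overline{P(x)}\,dx, \qquad P(x) := \sum_k a_k e^{2\pi i \lambda_k x},
\]
the H\"older--Orlicz inequality together with $\ell^2$ duality on the coefficient sequence $\{a_k\}$ reduces \eqref{e:Z} to the dual estimate
\[
\| P \|_{\exp(L^2)[0,1]} \;\lesssim\; \|\{a_k\}\|_{\ell^2}
\]
for all finitely supported scalar sequences $\{a_k\}$ and lacunary $\{\lambda_k\}$. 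Invoking the second equivalence in \eqref{e:supinf}, this in turn reduces to the $\Lambda(q)$-moment bound $\|P\|_{L^q[0,1]} \lesssim \sqrt{q}\,\|\{a_k\}\|_{\ell^2}$ for all $q \geq 2$.

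Next, by log-convexity of the $L^q$ norms it suffices to verify the moment bound at even integer exponents $q = 2m$. Expanding $\|P\|_{2m}^{2m} = \int |P|^{2m}\,dx$ via the multinomial theorem and orthogonality of distinct characters produces a combinatorial sum over $2m$-tuples of indices $(k_1,\dots,k_m;\ell_1,\dots,\ell_m)$ subject to the resonance condition $\lambda_{k_1}+\cdots+\lambda_{k_m} = \lambda_{\ell_1}+\cdots+\lambda_{\ell_m}$. The heart of the argument is the combinatorial consequence of lacunarity: when the ratio $\rho := \inf_k \lambda_{k+1}/\lambda_k$ is sufficiently large, the largest index appearing on one side of the resonance must coincide with the largest on the other (otherwise that single term exceeds the sum of all terms on the opposite side), and iterating, $(\ell_1,\dots,\ell_m)$ must be a permutation of $(k_1,\dots,k_m)$. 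The resonance count is therefore bounded by $m!\sum_{k_1,\dots,k_m}|a_{k_1}|^2\cdots|a_{k_m}|^2 = m!\,\|\{a_k\}\|_{\ell^2}^{2m}$, and Stirling yields $\|P\|_{2m} \lesssim \sqrt{m}\,\|\{a_k\}\|_{\ell^2}$, as required.

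The main technical obstacle is precisely this requirement of a ``sufficiently large'' lacunarity ratio in the combinatorial step. For an arbitrary $\rho > 1$, the standard workaround is to split $\{\lambda_k\}$ into a finite number $N = N(\rho)$ of subsequences, each with ratio at least $\rho^N \geq 3$, run the argument on each subsequence, and combine by the triangle inequality; the implicit constants remain controlled in terms of $\rho$. With the $L^{2m}$ bound in hand, interpolation between consecutive even exponents supplies the full range $q \geq 2$ of the $\Lambda(q)$ estimate, and the duality chain opened in the first paragraph completes the proof of \eqref{e:Z}.
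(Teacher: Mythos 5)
Your overall architecture (Orlicz duality, reduction to $\lVert P\rVert_{\textup{exp}(L^2)}\lesssim\lVert\{a_k\}\rVert_{\ell^2}$, then to moment bounds $\lVert P\rVert_{2m}\lesssim\sqrt m\,\lVert\{a_k\}\rVert_{\ell^2}$) is sound, but the combinatorial heart of the argument has a genuine gap. The claim that, once the lacunarity ratio is at least $3$, every resonance $\lambda_{k_1}+\dots+\lambda_{k_m}=\lambda_{\ell_1}+\dots+\lambda_{\ell_m}$ forces $(\ell_1,\dots,\ell_m)$ to be a permutation of $(k_1,\dots,k_m)$ is false: for the ratio-$3$ sequence $\{3^j\}$ one has $27+1+1+1=9+9+9+3$, so the largest frequency on one side need not occur on the other. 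The ``single largest term exceeds the sum of the opposite side'' reasoning requires the ratio to exceed the number of summands $m$, so the ratio you need grows with the exponent $2m$; and if you split into enough subsequences to achieve ratio $>m$, the number of subsequences, and hence the loss from recombining by the triangle inequality in $L^{2m}$, grows like $\log m$, which destroys precisely the uniform $\sqrt q$ growth that the $\textup{exp}(L^2)$, equivalently $L(\log L)^{1/2}$, endpoint demands. The classical trigonometric proofs get around this by a Riesz-product/subgaussian argument rather than naive resonance counting, and that is the idea missing from your write-up. (A small additional point: passing from the moment bounds to the $\textup{exp}(L^2)$ norm uses the first equivalence in \eqref{e:supinf}, not the second.)

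A second issue is the setting: Theorem \ref{t:z} here is a Walsh statement, $\widehat f(\lambda_k)=\langle f,w_{\lambda_k}\rangle$, so the relevant orthogonality is that of Walsh characters, whose product rule is dyadic (carry-free, XOR) addition rather than ordinary addition. Ironically this rescues your strategy: after splitting so that the ratio is at least $2$, each dyadic block $[2^j,2^{j+1})$ contains at most one $\lambda_k$, and a product of $2m$ Walsh characters $w_{\lambda_{k_i}}$ has nonzero integral only if the frequencies XOR to zero; peeling off the highest occupied block repeatedly, this forces every frequency to occur an even number of times. Since there are no carries, the $27+1+1+1=9+9+9+3$ phenomenon cannot occur, the count is exactly the Khintchine/Rademacher one, and $\lVert P\rVert_{2m}\lesssim\sqrt m\,\lVert\{a_k\}\rVert_{\ell^2}$ follows uniformly in $m$. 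So your plan is repairable in the paper's setting, but with the dyadic combinatorics, not the additive one you wrote. For comparison, the paper's own proof is shorter: by duality it suffices to bound $\bigl\lVert\sum_k c_k w_{\lambda_k}\bigr\rVert_{\textup{exp}(L^2)}$, and since the Haar square function of this sum is pointwise $\lesssim\lVert\{c_k\}\rVert_{\ell^2}$ (boundedly many $\lambda_k$ per dyadic block), the Chang--Wilson--Wolff inequality \eqref{e:cww} concludes immediately.
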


The right hand side of inequality \eqref{e:Z} is sharp. Since the left hand side is associated to Marcinkie\-wicz multipliers, it follows that the sparse bounds that we prove are sharp. 
 Inequalities such as that of Theorem \ref{t:z} are basic to Tao and Wright \cite{MR1900894}, and have  found previous usage in the treatment of pointwise convergence of lacunary Walsh series near $L^1$, see \cites{MR3189278,MR2914604}.
Closely related is a version of the Chang-Wilson-Wolff inequality \cite{MR800004}. 
\begin{theorem}\label{t:cww} Let $ f $ have  integral zero on $[0,1]$. We have 
\begin{equation}\label{e:cww}
\lVert f \rVert _{\textup{exp} (L ^{2 })} \lesssim \lVert S (f)\rVert _{\infty } .  
\end{equation}

\end{theorem}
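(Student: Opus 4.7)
The plan is to identify $S(f)$ with the dyadic martingale square function in the Walsh setting and then run the standard exponential martingale/Hoeffding argument. Concretely, since $T_{[0,2^k)}$ is the conditional expectation operator onto step functions at scale $2^{-k}$ (this is the content of \eqref{e:CE}), the Walsh block $D_k f:=T_{[2^{k-1},2^k)}f=T_{[0,2^k)}f-T_{[0,2^{k-1})}f$ coincides with the $k$-th dyadic martingale difference of $f$. In particular $\sum_{k\geq 1}|D_k f(x)|^2=S(f)(x)^2$ for every $x$ when $\widehat f(0)=0$, and each $D_k f$ is constant on every dyadic interval of length $2^{-k}$, taking opposite values $\pm c_I$ on the two children of the parent $I$ of length $2^{-k+1}$.

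From there I would apply the standard one-step estimate: on the parent $I$, the random variable $D_k f\mid I$ equals $\pm c_I$ with equal probability, hence
\begin{equation*}
\int_{I} e^{\lambda D_k f}\,\frac{dx}{|I|}=\cosh(\lambda c_I)\leq \exp\bigl(\tfrac{\lambda^2}{2}c_I^2\bigr).
\end{equation*}
Conditioning on the filtration generated by intervals of length $2^{-k+1}$ and iterating this bound across all scales yields, by the tower property and the a.e. identity $\sum_k c_{I_k(x)}^2 = S(f)(x)^2\leq\|S(f)\|_\infty^2$, the Gaussian moment-generating bound
\begin{equation*}
\int_0^1 e^{\lambda f(x)}\,dx\leq \exp\bigl(\tfrac{\lambda^2}{2}\|S(f)\|_\infty^2\bigr),\qquad \lambda\in\mathbb R.
\end{equation*}

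With this at hand, Chernoff's inequality gives the subgaussian tail $|\{x:|f(x)|>t\}|\leq 2\exp(-t^2/(2\|S(f)\|_\infty^2))$, which immediately gives $\|f\|_{\textup{exp}(L^2)}\lesssim \|S(f)\|_\infty$; equivalently, integrating the tail as in \eqref{e:supinf} produces $\|f\|_p\lesssim \sqrt{p}\,\|S(f)\|_\infty$ for all $p<\infty$, which is the other form of the conclusion.

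There is no real obstacle here: the Walsh setting makes the identification of $S(f)$ with the exact dyadic martingale square function transparent (each $D_k f$ is genuinely constant on parents and takes only two values), so the classical Hoeffding/exponential supermartingale argument applies verbatim. The only point to be careful about is that the hypothesis $\widehat f(0)=0$ is what makes the martingale start from $f_0\equiv 0$, which is needed for the iteration to telescope to $f$ itself.
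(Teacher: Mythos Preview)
Your proof is correct. The paper does not actually prove this theorem: it is stated with a citation to Chang--Wilson--Wolff \cite{MR800004} and then invoked as a black box in the proof of Zygmund's inequality. So there is no ``paper's own proof'' to compare against.

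That said, the argument you give is exactly the standard one in the dyadic setting, and all the steps check out. The identification of $D_k f=T_{[2^{k-1},2^k)}f$ with the $k$-th dyadic martingale difference is precisely what \eqref{e:CE} encodes; the key structural point that $\lvert D_k f\rvert^2=c_{I}^2$ is constant on the parent $I$ (hence $\mathcal F_{k-1}$-measurable) is what makes the exponential process $\exp\bigl(\lambda f_n-\tfrac{\lambda^2}{2}\sum_{k\leq n}\lvert D_k f\rvert^2\bigr)$ a genuine supermartingale via the one-step bound $\cosh(\lambda c_I)\leq\exp(\lambda^2 c_I^2/2)$. Bounding the predictable quadratic variation by $\lVert S(f)\rVert_\infty^2$ and passing to the limit (legitimate since $\lVert S(f)\rVert_\infty<\infty$ forces $f\in L^2$, so $f_n\to f$ a.e.) gives the Gaussian moment-generating bound, and the passage to the $\exp(L^2)$ norm via Chernoff or via \eqref{e:supinf} is routine. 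The only cosmetic remark is that the paper writes $S$ rather than $S_2$ in the statement; your reading of $S$ as the Haar square function $S_2$ is the intended one.
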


\begin{proof}[Proof of Theorem~\ref{t:z}]
The Zygmund inequality is well known, and classical. We briefly indicate a proof by duality. 
The dual to \eqref{e:Z} is 
\begin{equation*}
\Bigl\lVert \sum_{k} c_k w _{\lambda _k} \Bigr\rVert _{\textup{exp} (L ^2 )} \lesssim 
\lVert \{c_k\}\rVert _{\ell ^2 }. 
\end{equation*}
As the square function $ S$ of the function on the left is pointwise dominated by $ \lVert \{c_k\}\rVert _{\ell ^2 }$,  the last display follows from an application of the  Chang-Wilson-Wolff inequality \eqref{e:cww}.  
%
%
%
\end{proof}
 
\subsection{Marcinkie\-wicz Multipliers}

We prove Marcinkie\-wicz multipliers are also $ R _{1,1}$ multipliers.  
\begin{proposition}\label{p:M} For a multiplier $ m$ we have $ \lVert m \rVert _{ M} \simeq \lVert m\rVert _{R _{1,1}}$.

\end{proposition}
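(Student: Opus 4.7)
The proposition splits into two directions; $\|m\|_M \lesssim \|m\|_{R_{1,1}}$ is routine. I would first verify that every $R_{1,1}$-atom $a = J^{-1}\sum_{k} m_k$ satisfies $\|a\|_M \lesssim 1$: since the shells are pairwise disjoint and each $m_k$ is $\{0,1\}$-valued, $\|a\|_\infty \leq J^{-1}\leq 1$, and on the shell $[2^{k-1},2^{k})$ the BV of $a$ equals $J^{-1}\cdot 2J_k \leq 2$. The triangle inequality applied to an atomic decomposition $m = \sum_i \lambda_i a_i$ then gives $\|m\|_M \leq 3\sum_i |\lambda_i|$, and passing to the infimum yields the claim.

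For the reverse bound $\|m\|_{R_{1,1}} \lesssim \|m\|_M$, my plan is to build an explicit atomic representation via Jordan decomposition on each shell followed by layer cake, and then to exchange summation over shells with integration in the level variable $t$ so as to produce one cross-shell atom at each level. On the shell $[2^{k-1},2^k)$ decompose
\[
m = c_k\mathbf{1}_{[2^{k-1},2^k)} + p_k - q_k, \qquad c_k := m(2^{k-1}),
\]
where $p_k, q_k \geq 0$ are the cumulative positive and negative variations of $m$ on the shell, monotone nondecreasing, vanishing at $2^{k-1}$, and with total increments $V_k^\pm$ satisfying $V_k^+ + V_k^- \leq \|m\|_M$. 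The discrete layer cake for a monotone function vanishing on the left yields right-justified indicators,
\[
p_k(n) = \int_0^{V_k^+} \mathbf{1}_{[n_k^+(t),\, 2^k)}(n)\, dt, \qquad n_k^+(t) := \min\{n : p_k(n) > t\},
\]
and analogously for $q_k$. Summing over $k$ and applying Fubini turns the positive-variation piece of $m$ into $\int_0^{\sup_k V_k^+} F_t^+ \, dt$ with $F_t^+ := \sum_{k : V_k^+ > t} \mathbf{1}_{[n_k^+(t),\, 2^k)}$. The crux is that $F_t^+$ consists of exactly one subinterval per participating shell and is therefore precisely an $R_{1,1}$-atom with $J = 1$, so this piece has $R_{1,1}$-norm at most $\sup_k V_k^+ \leq \|m\|_M$; the negative-variation piece is identical. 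The shell-constant piece $\sum_k c_k \mathbf{1}_{[2^{k-1},2^k)}$ is handled by a further layer cake in $|c_k|$: for each $t$, $\sum_{k:c_k^\pm > t}\mathbf{1}_{[2^{k-1},2^k)}$ is one whole shell per index and again a $J=1$ atom, contributing at most $2\|m\|_\infty$. Combining, $\|m\|_{R_{1,1}} \lesssim \|m\|_\infty + \sup_k \mathrm{BV}_k \lesssim \|m\|_M$.

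The main obstacle is that a naive shell-by-shell atomic decomposition produces a total coefficient $\sum_k$ which diverges, since each shell can independently contribute $O(1)$. The resolution, and the conceptual point of the proof, is that layer cake of monotone functions returns right-justified nested subintervals, and such intervals assemble across infinitely many shells into a single $J=1$ atom at each slicing level $t$; integration in $t$ then pays only the uniform bound $\sup_k V_k^\pm$ instead of the divergent sum $\sum_k V_k^\pm$.
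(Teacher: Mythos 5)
Your proposal is correct and follows essentially the same route as the paper's proof: both first dispose of the easy direction by estimating the $M$-norm of an $R_{1,1}$-atom, then reduce via Jordan decomposition plus a shell-constant subtraction and apply a layer-cake formula (the paper's Lemma~\ref{l:m}) to each monotone piece, observing that slicing at a fixed level produces one right-justified interval per shell, i.e.\ a $J=1$ atom, so that integrating in the level variable pays only the uniform bound rather than a divergent sum over shells. The only cosmetic difference is that the paper normalizes each shell restriction to take values in $[0,1]$ and integrates over a fixed parameter $\theta\in[0,1]$, whereas you integrate up to $\sup_k V_k^\pm$ and restrict the sum to participating shells.
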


The core of this argument is this Lemma. The point of it is that  in \eqref{e:m}, the integrand is an indicator, and that the integration measure is 
Lebesgue, hence independent of $ m$. 

\begin{lemma}\label{l:m} Let $ m $ be an increasing function on $ [0,1]$ with $ m (0)=0$, and $ m (1)  < \infty  $.  
Then,  
\begin{equation}  \label{e:m}
m (\xi ) = \int _{0} ^{m (1)} \mathbf 1_{ [m ^{-1} (\theta ), m (1))} (\xi ) \; d \theta ,   \qquad 0\leq \xi < 1. 
\end{equation}

\end{lemma}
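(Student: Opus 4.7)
The plan is to prove the identity via the layer-cake (Cavalieri) representation of the increasing function $m$. Fix $\xi$ in the relevant domain and observe the tautological identity
\[
m(\xi) = \int_0^{m(1)} \mathbf 1_{\{\theta < m(\xi)\}}\, d\theta,
\]
which is valid because $0 \leq m(\xi) \leq m(1)$. Thus the entire content of the lemma is to identify, for Lebesgue-a.e.\ $\theta \in (0, m(1))$, the two indicators $\mathbf 1_{\{\theta < m(\xi)\}}$ and $\mathbf 1_{[m^{-1}(\theta),\, m(1))}(\xi)$.

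Next, I would pass to the generalized right-inverse $m^{-1}(\theta) := \inf\{s \in [0,1] : m(s) > \theta\}$, defined for $\theta \in [0, m(1))$. Monotonicity of $m$ immediately yields, for every $\theta \in (0, m(1))$ outside the at most countable set of jump heights of $m$, the equivalence
\[
\theta < m(\xi) \iff m^{-1}(\theta) \leq \xi.
\]
Combined with $\xi < m(1)$ (which holds in the relevant regime of $\xi$), this produces the pointwise identity $\mathbf 1_{\{\theta < m(\xi)\}} = \mathbf 1_{[m^{-1}(\theta),\, m(1))}(\xi)$ for a.e.\ $\theta$, and substituting into the tautology above gives \eqref{e:m}.

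The main obstacle here is none of significant depth: the lemma is a soft measure-theoretic statement. The only care required is in defining $m^{-1}$ at points where $m$ jumps or is locally constant; since the ``bad'' set of $\theta$ where the two indicators could disagree has Lebesgue measure zero, it plays no role after integration. The conceptual point --- which the authors emphasize in the sentence immediately preceding the lemma --- is that the integrand in \eqref{e:m} is an indicator (with the outer measure being $m$-independent Lebesgue measure on $[0, m(1))$), so this representation exhibits any bounded increasing $m$ as a uniform average over unit-height jumps, which is precisely what one needs to implement the $R_{1,1}$ atomic decomposition of an arbitrary Marcinkiewicz multiplier in Proposition \ref{p:M}.
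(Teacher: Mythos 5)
Your proof is correct and takes essentially the same route as the paper's. The paper's two-line argument writes the Stieltjes representation $m(\xi) = \int_0^1 \mathbf 1_{[\theta,1]}(\xi)\, dm(\theta)$ and substitutes $m(\theta)=t$; your Cavalieri/layer-cake tautology is that same computation, with the generalized right-inverse and the a.e.\ identification of the two indicators spelled out rather than left implicit. One caveat: your phrase ``combined with $\xi < m(1)$'' is spurious --- nothing forces $m(1)\geq \xi$. The right endpoint of the interval in \eqref{e:m} is a typo for $1$ (a point in the domain of $m$, not its range), as both the paper's change of variables and the subsequent use of the lemma inside Proposition~\ref{p:M} (where the indicator is $\mathbf 1_{[\tau_j(\theta),1)}$) confirm; with that correction the upper endpoint is automatic since $\xi<1$, and your argument closes cleanly.
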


\begin{proof}
We have 
\begin{equation*}
m (\xi )   = \int _{0} ^{1} \mathbf 1_{ [ \theta , 1]} (\xi )d m (\theta ) . 
\end{equation*}
Make the change of variables $ m(\theta) = t$.  
\end{proof}

\begin{proof}[Proof of Proposition~\ref{p:M}]  
It is clear that for an $ R _{1,1}$ atom $ m$ we have $ \lVert m\rVert _{R _{1,1}} \lesssim \lVert m\rVert _{M}$. And, so this inequality holds for any $ R _{1,1}$ multiplier. 

For the reverse, we argue that  a Marcinkie\-wicz multiplier $ m$ is a convex combination of $ R _{1,1}$ atoms. 
For $ j \geq 0$, the function  $ m \mathbf 1_{ [ 2 ^{j}, 2 ^{j+1})}$    
is a function of bounded variation. Such functions are the difference of two increasing functions, so we assume 
that $ m$ is increasing on each interval $ [ 2 ^{j}, 2 ^{j+1})$.  A mulitiplier that is constant on each such interval is an $ R _{1,1}$ multiplier, 
so we can assume that $ m (2 ^{j})=0$ for all $ j\geq 0$.  And, that $ \sup _{2 ^{j} \leq \xi < 2 ^{j+1}} m (\xi ) \leq 1$ for all $ j$. 

It follows from \eqref{e:m} that we can choose functions $ \tau _j$ so that \emph{uniformly in $ j$} we have 
\begin{equation*}
m (\xi ) = \int _{0} ^{1} \mathbf 1_{  [\tau _j (\theta  ),1)} ((\xi - 2 ^{j})/2 ^{j} )  \; d \theta , \qquad  2 ^{j} \leq \xi < 2  ^{j+1} . 
\end{equation*}
For each $0< \theta <1 $, the multiplier below is an $ R _{1,1}$ atom. 
\begin{equation*}
\sum_{j \geq 0}    \mathbf 1_{  [\tau _j (\theta  ),1)} ((\xi - 2 ^{j})/2 ^{j} )
\end{equation*}
It follows that $ m$ is in the convex hull of $ R _{1,1}$ atoms.  
\end{proof}

\section{The Key Decomposition} \label{s:core}
The main step in our proof of Theorem \ref{t:main} is a multi-frequency type Calder\'on-Zygmund decomposition adapted to the \emph{jumps} of the multiplier $m \in R_{p,1}$.  We detail one version of it here, and later two variants on the theme.

In the main Lemma \ref{l:key} below, we will be modifying the multiplier $m$.  
That step requires these definitions.  Given intervals $ \omega $ and $ I$, 
we set the \emph{interior of $ \omega $ relative to scale $I$} to be 
\begin{equation} \label{e:omegaI}
\omega ^{o} _{I} = \begin{cases}
\varnothing & \lvert  \omega \rvert  \lvert  I\rvert <1 
\\
{\displaystyle\bigcup} \left\{  \left[\frac{n}{\lvert  I\rvert }, \frac{n+1}{\lvert  I\rvert }\right)  \;:\;    \left[\frac{n}{\lvert  I\rvert }, \frac{n+1}{\lvert  I\rvert }\right)\subset \omega \right\} & \textsf{otherwise}
\end{cases} 
\end{equation}  
Given a multiplier $ m $ of the form 
\begin{equation}  \label{e:msum}
m = \sum_{ \omega \in \Omega _{m}} c_\omega \mathbf 1_{\omega } 
\end{equation}
such as  in the case of $m$ being an   atom of at most  $J$ jumps, 
and a dyadic interval $ I\subset[0,1]$, we set 
\begin{equation} \label{e:mI}
m _{I} =  \sum_{ \substack{ \omega \in \Omega _{m} }} c_\omega  \mathbf 1_{\omega ^{o}_I }. 
\end{equation}
In words, this is \emph{the induced multiplier on $ I$.} 
Note that for functions $ f$ supported on $ I$ 
\begin{equation*}
T _{m _{I}} f =  \sum_{p\in \mathbf P (I)} m_I (\omega _p)    \langle f, w_p \rangle w_p, 
\end{equation*}
where $m_I (\omega _p) $ is the unique value that $ m _{I} $ takes on the interval $ \omega _p$: compare to the localization principle \eqref{e:00}. 
More generally, if $I_0$ is a dyadic subinterval of $[0,1]$, the multiplier  $m$ is said to be \emph{adapted} to $I_0$ if it is  of the type \eqref{e:msum} with 
$$
\lvert\omega\rvert  \cdot  \lvert  I_0\rvert \in \mathbb N   \qquad \textup{for all $ $} \omega \in \Omega_m. 
$$
The latter display signifies that $m$ is constant on dyadic subintervals of $(0,\infty)$ of length  $\frac{1}{|I_0|}$.
The following proposition, which will be used in our recursive construction in Section \ref{s:4}, is an immediate consequence of the definition of atoms.
\begin{proposition}\label{p:induced} Let $ m $ be an atom with at most $ J$ jumps.   
For any interval $ I\subset [0,1]$ the induced multiplier  $ m_I$ is also an atom of at most $ J$ 
jumps and it is adapted to $I$.
\end{proposition}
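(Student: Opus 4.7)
The plan is to unpack both definitions and verify scale-by-scale that the atomic structure of $m$ is preserved by the induced multiplier operation. Since $m$ is an atom of at most $J$ jumps, by Definition~\ref{d:jumps} it has the form $m = J^{-1/p} \sum_{k \geq 1} m_k$ with $m_k = \sum_{j=1}^{J_k} \mathbf{1}_{\omega_{j,k}}$, where the $\omega_{j,k}$ are pairwise disjoint integer intervals in $[2^{k-1},2^k)$ and $J_k \leq J$. By the linearity of \eqref{e:mI}, it suffices to check that
$$m_I = J^{-1/p} \sum_{k\geq 1} \sum_{j=1}^{J_k} \mathbf 1_{(\omega_{j,k})^o_I}$$
satisfies the same structural properties.

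The key observation is that for dyadic $I \subset [0,1]$ one has $|I| = 2^{-\ell}$ for some integer $\ell \geq 0$, so $1/|I|=2^\ell$ is a positive integer and the intervals $[n\cdot 2^\ell,(n+1)\cdot 2^\ell)$ form a genuine partition of $(0,\infty)$ into integer intervals. For each $(j,k)$, the set $(\omega_{j,k})^o_I$ is then either empty (precisely when $|\omega_{j,k}|\cdot |I| < 1$) or the union of the consecutive such dyadic intervals contained in $\omega_{j,k}$; since $\omega_{j,k}$ is itself an integer interval, this union is again an integer interval, and the containment $(\omega_{j,k})^o_I \subset \omega_{j,k} \subset [2^{k-1},2^k)$ is immediate.

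Disjointness of the $(\omega_{j,k})^o_I$ for fixed $k$ is inherited from the disjointness of the $\omega_{j,k}$, and the number of nonempty $(\omega_{j,k})^o_I$ at scale $k$ is at most $J_k\leq J$ (possibly strictly smaller, if some of them have been annihilated by the interior operation). Taken together, these remarks show that $m_I$ is an $R_{p,1}$-atom of at most $J$ jumps. Adaptation to $I$ is then automatic: each nonempty $(\omega_{j,k})^o_I$ has length equal to a positive integer multiple of $1/|I|$, so $|(\omega_{j,k})^o_I|\cdot |I| \in \mathbb{N}$, which is exactly the condition from the definition.

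I do not foresee a real obstacle; the argument is essentially a bookkeeping verification confirming that $\omega \mapsto \omega^o_I$ interacts cleanly with the atomic structure. The one check one should not skip is that $1/|I|$ is an integer, since this is what guarantees that the dyadic interior returns a set compatible with the integer-interval structure required by Definition~\ref{d:jumps}.
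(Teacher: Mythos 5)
Your proof is correct and matches the paper, which offers no explicit argument for this proposition and simply calls it an ``immediate consequence of the definition of atoms''; your write-up is the natural unpacking of that claim. One small imprecision: you assert $(\omega_{j,k})^o_I = \varnothing$ ``precisely when'' $|\omega_{j,k}|\cdot|I| < 1$, but the ``otherwise'' branch of \eqref{e:omegaI} can also yield the empty set (e.g.\ $|I|^{-1}=4$ and $\omega=[2,7)$ has $|\omega||I|>1$ yet contains no dyadic interval of length $4$), so the condition is sufficient but not necessary for emptiness; since your argument only uses that empty terms may be dropped and the survivors form disjoint integer intervals in the correct octave with length an integer multiple of $1/|I|$, this does not affect the conclusion.
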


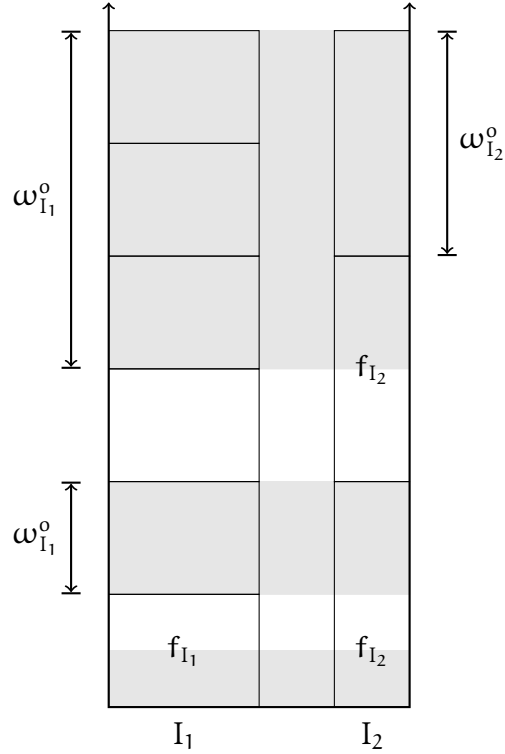
\begin{figure}[h]
\centering 
\begin{tikzpicture}[yscale=.75] 
\foreach \b/\d in {0/1,2/4, 6/8, 8/12} 
\filldraw[gray!20!white]  (0,\b) rectangle (4,\d); 

\draw[<->,thick] (0, 12.5) -- (0,0) --  (4,0) -- (4,12.5); 

\foreach \a/\b in {0/2,2/4,4/6,6/8,8/10,10/12 }  \draw (0, \a) rectangle (2,\b); 
\foreach \a/\b in {0/4,4/8,8/12}  \draw (3, \a) rectangle (4,\b); 

\draw (1,-.5) node {$ I_1$};  \draw ( 3.5, -.5) node {$ I_2$};  

\draw[|<->|,thick]  (4.5,8) -- (4.5,12) node[midway,right ] {$ \omega ^{o} _{I_2}$}; 
\draw[|<->|,thick]  (-.5,6) -- (-.5,12) node[midway, left] {$ \omega ^{o} _{I_1}$};    
\draw[|<->|,thick]  (-.5,2) -- (-.5,4) node[midway, left] {$ \omega ^{o} _{I_1}$}; 
\draw (1,1) node {$ f _{I_1}$};
\draw (3.5,1) node {$ f_{I_2}$};\draw (3.5,6) node {$ f _{I_2}$};
\end{tikzpicture}

\caption{An illustration of the multi-frequency decomposition for a multiplier adapted to $ I_0$. The multiplier is a sum of indicators, denoted by the gray bands. 
There are two subintervals, $ I_1$ which is $ {\frac12}$ the length of $ I_0$, on the left, and $ I_2$ which is $ \tfrac 14$ the length.
  The tiles in $ \mathbf P (I_j) $, for $ j=1,2$, are drawn.  The non-empty ``interiors'' are indicated for both intervals. For $ I_1$, two tiles would contribute to $ f _{I_1}$, for $ I_2$ there is one  such tiles.  }
\label{f:multi}
\end{figure}

The key multi-frequency decomposition argument is contained in the following Lemma.
\begin{lemma}\label{l:key} Let $ f $ be supported on  $ I_0\in \mathcal D_0$, 
and let $ m $ be an  $ R _{p,1}$-atom of at most  $J$ jumps, adapted to $ I_0$. 
Let $I \in \mathcal I $ be a collection of disjoint dyadic subintervals  of $I_0$ with 
\begin{equation} \label{e:condition}
\left(\sup_{ I_0\setminus\left( \bigcup_{I\in \mathcal I} I\right)} |f| \right) + \left(
\sup_{I \in \mathcal I } \langle f \rangle _{I, \psi_2}  \right) \leq C \langle f \rangle _{I_0, \psi_2} 
\end{equation}
for some given $C>0$. Then there exists a decomposition of $f$ with the following properties:
\begin{align}\label{e:gb} &
f =  f _{\infty } + \sum_{I\in \mathcal I} (f_I + f'_I) ,
\\ & \label{e:fzI}
\lVert  f _{\infty }\rVert_ \infty \lesssim \langle f \rangle _{I_0, \psi_2},
\\ &
 \textup{supp}\, f _I, \,\textup{supp}\, f'_I \subset I, \qquad I \in \mathcal I, \label{e:supp}
\\ &
\label{e:fI}
\langle f_I \rangle _{I,2} \lesssim  \sqrt J   \cdot \langle f \rangle _{I_0, \psi_2} , \qquad I \in \mathcal I, 
\\& \label{e:SfI}
\langle  S_I (f_I) \rangle _{I, \infty } 
\lesssim   \sqrt J   \cdot \langle f \rangle _{I_0, \psi_2} , \qquad I \in \mathcal I, 
\\ & \label{e:Th}
T _{m} f'_I = T _{m_I} f'_I ,\qquad I \in \mathcal I.  
\end{align}
In \eqref{e:SfI}, we are using the notation  
\begin{equation}
\label{e:SI}
S_I (f) ^2 := \sum_{Q \;:\; Q\subset I}  \frac{\lvert \langle f, h_Q  \rangle \rvert ^2 } {\lvert  Q\rvert } \mathbf 1_{Q}. 
\end{equation}
\end{lemma}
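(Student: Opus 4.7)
My plan is to prove Lemma~\ref{l:key} as a multi-frequency Calder\'on--Zygmund decomposition of $f$, tailored to the boundary frequencies of $m$ at each scale $I\in\mathcal I$. I begin by setting
\[
f_\infty := f \mathbf{1}_{I_0 \setminus \bigcup \mathcal{I}} + \sum_{I \in \mathcal I} \langle f\rangle_I \mathbf{1}_I,
\]
whose $L^\infty$ bound \eqref{e:fzI} follows from \eqref{e:condition} together with the Orlicz dominance $|\langle f\rangle_I|\leq \langle f\rangle_{I,\psi_2}$. On each $I\in\mathcal I$, I call $p\in\mathbf P(I)$ a \emph{boundary tile} (write $p\in\mathbf P_{bd}(I)$) precisely when $(m-m_I)|_{\omega_p}\not\equiv 0$, and set
\[
f_I:=\sum_{p\in\mathbf P_{bd}(I)}\langle f,w_p\rangle w_p,\qquad f'_I:=(f-\langle f\rangle_I)\mathbf{1}_I-f_I.
\]
The support property \eqref{e:supp} is automatic. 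For each interior tile $p$, the adaptation of $m$ to $I_0$ combined with the definition \eqref{e:omegaI} of $\omega^o_I$ forces $m$ to be constant on $\omega_p$; together with the Walsh-tile orthogonality $\langle w_p,w_n\rangle=0$ for $n\notin\omega_p$, this yields $T_mw_p=m_I(\omega_p)w_p$, and hence \eqref{e:Th} by linearity.

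For \eqref{e:fI} I exploit the atom structure \eqref{e:atom}: each Walsh annulus $[2^{k-1},2^k)$ contributes at most $2J$ boundary tiles to $\mathbf P_{bd}(I)$ (two per $\omega_{j,k}$ protruding past the resolution scale $|I|^{-1}$), while the small annuli with $2^k\leq|I|^{-1}$ contribute $O(1)$ tiles each. I can therefore partition $\mathbf P_{bd}(I)$ into $\lesssim J$ lacunary chains $\{\mathcal L_\ell\}$, each carrying at most one tile per annulus. Pulling $f|_I$ back to $[0,1)$ via $g(y):=f(\ell_I+|I|y)$ so that $\langle f,w_p\rangle=|I|^{1/2}\widehat g(n_p)$, Zygmund's inequality \eqref{e:Z} applied chainwise yields $\sum_{p\in\mathcal L_\ell}|\langle f,w_p\rangle|^2\lesssim |I|\langle f\rangle_{I,\psi_2}^2$. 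Summing over the $\lesssim J$ chains and invoking \eqref{e:condition} produces \eqref{e:fI}.

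The main obstacle is \eqref{e:SfI}. A Walsh--Haar computation starting from \eqref{e:signWalsh} reveals that $|\langle w_p,h_Q\rangle|\in\{0,\sqrt{|Q|/|I|}\}$, supported at the single Haar scale $|Q|=|I|/2^k$ matched to the annulus index $k$ of $p$. Within each chain $\mathcal L_\ell$ the Haar-coefficient supports over distinct $k$ are therefore pairwise disjoint, and a direct calculation gives the pointwise identity $S_I(f_I^{(\ell)})^2\equiv \langle f_I^{(\ell)}\rangle_{I,2}^2$, constant on $I$. The delicate step is combining across the $\lesssim J$ chains without incurring the naive $\sqrt J$ loss from Cauchy--Schwarz: the plan is to estimate the cross terms in $S_I(f_I)^2-\sum_\ell S_I(f_I^{(\ell)})^2$, which at each shared Haar scale $|Q|=|I|/2^k$ take the form of products $\epsilon_{p,Q}\epsilon_{p',Q}=w_{n_p\oplus n_{p'}}(\xi_Q)$ of Walsh characters on the $Q$-grid, and to exploit the orthogonality of these characters across the grid to dominate their aggregate pointwise contribution by $\langle f_I\rangle_{I,2}^2$ up to an admissible error. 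Combined with the pooled bound $\sum_\ell \langle f_I^{(\ell)}\rangle_{I,2}^2=\langle f_I\rangle_{I,2}^2\lesssim J\langle f\rangle_{I,\psi_2}^2$ from \eqref{e:fI} and \eqref{e:condition}, this delivers \eqref{e:SfI}.
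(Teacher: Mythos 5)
Your overall architecture tracks the paper's: the same $\mathbf P_{bd}(I)/\mathbf P_m(I)$ split of the tile set, the same appeal to lacunary chains and Zygmund's inequality for \eqref{e:fI}, and the same use of the scale-disjointness of the Walsh-to-Haar expansion. But two points require attention.

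First, the cosmetic change of routing the DC part $\langle f\rangle_I\mathbf 1_I$ into $f_\infty$ and correspondingly setting $f_I'=(f-\langle f\rangle_I)\mathbf 1_I-f_I$ is not harmless. If the DC tile $\omega_{DC}=[0,|I|^{-1})$ happens to be a \emph{boundary} tile (which does occur in the recursion: $m_I$ vanishes on $[0,|I|^{-1})$ but can be nonzero on $[|I|^{-1},|I'|^{-1})$, so the DC tile of a child $I'$ can carry a jump), then $f_I'$ contains the extra term $-\langle f,w_{DC}\rangle w_{DC}$ which is \emph{not} an interior tile. On this term $T_{m}$ does not act as $T_{m_I}$, so \eqref{e:Th} fails. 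The paper avoids this by leaving the DC tile wherever it falls in the partition $\mathbf P(I)=\mathbf P_m(I)\sqcup\mathbf P_m'(I)$ and taking $f_\infty=f\mathbf 1_{I_0\setminus E}$ alone. Keep that choice.

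Second, and more substantively, your treatment of \eqref{e:SfI} is not a proof but a program, and the program does not close. You correctly note that within a single chain $\mathcal L_\ell$ the Haar scales are disjoint so $S_I(f_I^{(\ell)})^2\equiv\langle f_I^{(\ell)}\rangle_{I,2}^2$, and you correctly flag that naive triangle inequality across $\lesssim J$ chains would lose a factor. But the claim that the off-diagonal Walsh-character terms can be dominated pointwise by $\langle f_I\rangle_{I,2}^2$ is false: at the left endpoint $\ell_I$ every sign $\sigma_{p,Q}=w_{n_p-2^{k-1}}(0)=+1$, so for $x$ near $\ell_I$ one has
\begin{equation}
S_I(f_I)^2(x)=\frac{1}{|I|}\sum_k\Bigl(\sum_{p\in\mathbf P_m(I,k)}|\langle f,w_p\rangle|\Bigr)^2,
\end{equation}
and if each frequency annulus carries $J$ tiles with comparable coefficients this is $\approx J\,\langle f_I\rangle_{I,2}^2$, not $\langle f_I\rangle_{I,2}^2$. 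Orthogonality of the characters $w_{n_p\oplus n_{p'}}$ is an $L^2(I)$ statement and gives no $L^\infty$ control. So the "admissible error" you invoke does not exist in the form you need. For the record, the paper's own proof of \eqref{e:SfI} is itself terse at exactly this juncture — it asserts ``it suffices to prove it for $J=1$'' and then runs the disjoint-scale computation, which delivers $\langle S_I(f_I)\rangle_{I,\infty}\lesssim\langle f_I\rangle_{I,2}$; feeding \eqref{e:fI} into a chain-by-chain reduction produces a $J$, not a $\sqrt J$. You should not emulate a cross-term cancellation that the Walsh signs do not provide; instead, either carry out the paper's computation faithfully and confront what the reduction to $J=1$ really yields, or note that \eqref{e:SfI} is in fact not invoked anywhere in the subsequent sections (only \eqref{e:ftilde}, \eqref{e:fzI}, \eqref{e:fI}, \eqref{e:Th} are used), so the precise power of $J$ here is inert for the main theorems.
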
 

The full decomposition above will be used, but also note that 
for $ \tilde f =f _{\infty } + \sum_{I\in \mathcal I} f_I$, we have 
\begin{equation}\label{e:ftilde}
\langle \tilde f  \rangle _{I_0, 2}\lesssim  \sqrt J   \cdot \langle f \rangle _{I_0, \psi_2}
\end{equation}
which is a direct consequence of \eqref{e:fzI} and \eqref{e:fI}.

\begin{proof}
See Figure~\ref{f:multi} for an illustration of the different elements of the decomposition introduced here.   
We can assume that $ \langle f \rangle _{I_0, \psi_2} =1$. 
Let $ E= \bigcup \{I\in \mathcal I\} $, and $ f _{\infty } = f \mathbf 1_{I_0 \setminus E}$. 
It is clear that \eqref{e:fzI} holds.  
Our focus is on  the orthogonal decomposition of $f\mathbf{1}_I$, for $ I\in \mathcal I$,  induced by the disjoint partition of $\mathbf{P}(I)$ into 
\begin{equation} \label{e:partP}
\begin{split} 
 & \mathbf{P}_{m}  (I)=  \mathbf P (I) \setminus \mathbf{P}_{m}'(I) . 
 \\
&\mathbf{P}_{m} '(I)=\big\{p \in \mathbf P (I):   \forall  \omega \in \Omega_m:  \omega _p\subset \omega^\circ_{I} \quad \textup{$ $ or $ $ }   \omega _p \cap \omega = \varnothing \big\}. 
\end{split}  
\end{equation}
Recall the notation \eqref{e:msum} for the multiplier, and \eqref{e:omegaI} for $\omega_{I}^{\circ}$.
Notice that, in particular, $\mathbf{P}_{m}(I)$ is made of those elements of $\mathbf{P}(I)$ on which the multiplier $m$ has a  jump on the interval $ \omega _p$.  
We split $f\mathbf{1}_I =  f_I+f'_I$ with
\begin{equation} \label{e:split}\begin{split}
 & f_I=\sum_{p\in \mathbf{P}_{m} (I)} \langle f,w_p \rangle w_p,    \\
&f'_I=\sum_{p\in \mathbf{P}_{m}'(I)} \langle f,w_p \rangle w_p.
\end{split}\end{equation}

The support property \eqref{e:supp} is obvious, as is 
Property \eqref{e:Th}. 
It remains to prove the essential control on $ f_I$, \eqref{e:fI} and \eqref{e:SfI}.  Indeed, \eqref{e:fI} is a consequence of the Zygmund inequality. 
Fix a dyadic interval $ D (I,k ) = \lvert  I\rvert ^{-1}  [2 ^{k}, 2 ^{k+1}) $.   From the above discussion, the cardinality of   
\begin{equation*}
\mathbf{P}_{m}(I,k) =  \big\{ p  \in \mathbf{P}_{m}(I)  \;:\;    \omega _p\subset D  (I,k )
\big\}  
\end{equation*}  
is controlled by the number of  jumps $m$ makes on $D (I,k )$,  which is at most $ J$. Therefore, we can divide $ \mathbf{P}_{m}(I)$ into at most $ J$ collections 
$ \mathbf G ^{j}$, for $ 1\leq j \leq J$ so that $ \mathbf G ^{j } \cap  \mathbf{P}_{m}(I,k)$ has at most one element.  Therefore, for   $ \phi \in L ^2 (I)$, we have 
\begin{equation} \label{e:L2g}
\begin{split} 
\lvert  I\rvert ^{-1/2}   |\langle f_I , \phi  \rangle  | 
& = \left| \sum_{j=1} ^{J} \lvert  I\rvert ^{-1/2} \langle  f_I , P _{\mathbf G ^{j}} \phi \rangle  \right|
\\
& \lesssim  \sum_{j=1} ^{J}  \langle f_I \rangle _{I, \psi_2} \langle P _{\mathbf G ^{j}} \phi  \rangle _{I, \textup{exp} (L ^2 )} 
\\
& \lesssim   \sum_{j=1} ^{J} 
 \langle P _{\mathbf G ^{j}} \phi  \rangle _{I, 2} \lesssim \sqrt J \langle  \phi \rangle_{I,2} .  
\end{split}
\end{equation}
Here, we have denoted 
\[
P _{\mathbf G ^{j}} \phi=\sum_{p\in \mathbf G ^{j} } \langle\phi, w_p \rangle w_p,
\]
we have later  
 appealed to the $ \psi_2$-$\textup{exp} (L ^2 )$ duality,   applied the Zygmund inequality \eqref{e:Z} in its equivalent dual form and used orthogonality to obtain the last inequality.  This proves \eqref{e:fI} by duality. 
 
It remains to prove the square function estimate \eqref{e:SfI}.  By the $ J$ dependence in that inequality, it suffices to prove it for the case of $J=1 $, namely that the collections $ \mathbf P _{m} (I,k)$ have cardinality at most one for all integers $ k$.   Then, note that  for $ p \in \mathbf P _{m} (I,k)$, 
we can expand the projection relative $ w _p $ in terms of Haar functions:  
\begin{equation*}
\langle f , w _p \rangle w _p = \sum_{ \substack{Q\subset I_p\\ 2 ^{k} \lvert  Q\rvert = \lvert  I_p\rvert   }} \langle f , w _p  \rangle \langle w _p , h_Q \rangle h_Q . 
\end{equation*} 
The Haar functions are disjointly supported, and this estimate is trivial:
\begin{equation*}
\lvert  \langle w _p , h_Q \rangle \rvert \leq \frac{ \sqrt {\lvert  Q\rvert }} { \sqrt {\lvert  I_p\rvert }} . 
\end{equation*}
So, we can compute a component of the Haar square function of $ f_I$ as 
\begin{align*}
\sum_{ \substack{Q\subset I_p\\ 2 ^{k} \lvert  Q\rvert = \lvert  I_p\rvert   }} 
\bigl\lvert   \langle f , w _p  \rangle \langle w _p , h_Q \rangle h_Q \bigr\rvert ^2 
& 
\leq  \frac{\langle f , w _p  \rangle ^2 } {\lvert  I_p\rvert } 
\sum_{ \substack{Q\subset I_p\\ 2 ^{k} \lvert Q\rvert = \lvert  I_p\rvert   }}  
\lvert  Q\rvert  \cdot h_Q ^2 \lesssim  \frac{\langle f , w _p  \rangle ^2 } {\lvert  I_p\rvert } \mathbf 1_{I_p}.  
\end{align*}
It follows from \eqref{e:fI} that 
\begin{align*}
(S _{I} f_I ) ^2 & \lesssim  \mathbf 1_{I }\sum_{p \in \mathbf P _{m} (I)} \frac{\langle f_I , w _p  \rangle ^2 } {\lvert  I\rvert }  \lesssim  1 .  
\end{align*}
This proves \eqref{e:SfI}, and completes the Lemma. 
\end{proof}

We need a variant of the previous argument.  In the hypothesis, we can impose conditions on 
 the  local $\langle f  \rangle _{I,q} $ norms, 
for $ 1 < q \leq  2$.  The conclusions then  depend  on the number of jumps $ J$ of the $ R _{q,1}$ atom in a different way. 
The precise form of this variant is as follows. 
\begin{lemma}\label{l:keyp} Let $1<q\leq 2$, $ f $ be supported on  $ I_0\in \mathcal D_0$, 
and let $ m $ be an  $ R _{q,1}$-atom of at most  $J$ jumps, adapted to $ I_0$. 
Let $I \in \mathcal I $ be a collection of disjoint dyadic subintervals  of $I_0$ with 
\[
\left(\sup_{ I_0\setminus\left( \bigcup_{I\in \mathcal I} I\right)} |f| \right) + \left(
\sup_{I \in \mathcal I } \langle f \rangle _{I, q}  \right) \leq C \langle f \rangle _{I_0, q} 
\]
for some given $C>0$. Then there exists a decomposition of $f$ as in \eqref{e:gb}, satisfying \eqref{e:gb}-\eqref{e:Th}, and with \eqref{e:fI} replaced by
\begin{equation} \label{e:gq}
\langle f_{I} \rangle _{I,2} \lesssim  {\frac{\lvert J\rvert ^{\frac1q-\frac12}}{\sqrt{q-1}}}   \langle f \rangle _{I_0, q} , \qquad I \in \mathcal I.
 \end{equation}
\end{lemma}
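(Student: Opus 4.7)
The decomposition of $f$ is verbatim that of Lemma \ref{l:key}: set $E := \bigcup_{I \in \mathcal I} I$, define $f_\infty := f\mathbf 1_{I_0 \setminus E}$, and for each $I \in \mathcal I$ split $f\mathbf 1_I = f_I + f'_I$ via the tile partition $\mathbf P(I) = \mathbf P_m(I) \sqcup \mathbf P'_m(I)$ from \eqref{e:partP}. Properties \eqref{e:gb}, \eqref{e:supp} and \eqref{e:Th} are immediate from the construction. Property \eqref{e:fzI}, read in its $q$-average form, follows at once from the hypothesis. The Haar-packing argument establishing \eqref{e:SfI} is norm-agnostic and carries over verbatim from Lemma \ref{l:key}.

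The only genuinely new estimate is the refined $L^2$ bound \eqref{e:gq}. Partitioning $\mathbf P_m(I) = \bigsqcup_{j=1}^J \mathbf G^j$ into $J$ lacunary groups, each containing at most one tile per dyadic frequency scale (as in the proof of Lemma \ref{l:key}), orthogonality of the Walsh wave packets delivers
\[
\lVert f_I\rVert_{L^2(I)}^2 \;=\; \sum_{j=1}^J \bigl\lVert P_{\mathbf G^j}(f\mathbf 1_I)\bigr\rVert_{L^2(I)}^2.
\]
The calculation \eqref{e:L2g} of Lemma \ref{l:key}, based on the Zygmund inequality \eqref{e:Z}, is nothing but the operator bound
$\lVert P_{\mathbf P_m(I)}\rVert_{L^{\psi_2}(I)\to L^2(I)}\lesssim \sqrt J$,
whereas the trivial orthogonal-projection bound reads
$\lVert P_{\mathbf P_m(I)}\rVert_{L^2(I)\to L^2(I)}\leq 1$.
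Real interpolation of these two operator estimates between the input endpoints $L^{\psi_2}(I)$ and $L^2(I)$ at the parameter $\theta = 2/q'$ identifies the intermediate input space with $L^q(I)$—with embedding constant $\lesssim (q-1)^{-1/2}$, a direct consequence of \eqref{e:supinf}—and yields
\[
\lVert P_{\mathbf P_m(I)}\rVert_{L^q(I)\to L^2(I)} \;\lesssim\; (q-1)^{-1/2}\,(\sqrt J)^{1-\theta} \;=\; \frac{J^{\frac1q-\frac12}}{\sqrt{q-1}},
\]
which is \eqref{e:gq} in averaged form.

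\textbf{Main obstacle.} The subtle technical point is carrying out the real interpolation rigorously with the Orlicz endpoint space $L^{\psi_2}$ while tracking the constant $(q-1)^{-1/2}$ coming from the embedding of $L^q$ into the interpolation scale. Note that at $q=2$ the statement collapses to the trivial projection bound, while as $q\to 1^{+}$ one recovers the $\sqrt J$-bound of Lemma \ref{l:key}, so interpolation is the natural route. An elementary alternative via a Calder\'on-Zygmund-type truncation $f = f\mathbf 1_{|f|\leq \Lambda} + f\mathbf 1_{|f|>\Lambda}$, combined with the trivial $L^2$-bound on the small part and the $L^{\psi_2}$-bound of Lemma \ref{l:key} on the tail, can be made to almost work, but tends to lose logarithmic factors in $J$ compared to the sharp exponent $1/q-1/2$.
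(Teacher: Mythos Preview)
The paper does not supply a proof of Lemma~\ref{l:keyp}: it merely announces the statement as ``a variant of the previous argument.''  Your proposal correctly recognises that the decomposition, the support and localisation properties \eqref{e:gb}--\eqref{e:supp}, \eqref{e:Th}, and the square-function estimate \eqref{e:SfI} carry over verbatim from Lemma~\ref{l:key}.  The only issue is \eqref{e:gq}, and here your argument has a genuine gap.

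The gap is the interpolation step.  You assert that real interpolation between $L^{\psi_2}\to L^2$ (norm $\sqrt J$) and $L^2\to L^2$ (norm $1$) at parameter $\theta=2/q'$ produces the $L^q\to L^2$ bound $J^{1/q-1/2}(q-1)^{-1/2}$, and that the required embedding of $L^q$ into $(L^{\psi_2},L^2)_{\theta,\cdot}$ with constant $(q-1)^{-1/2}$ is ``a direct consequence of \eqref{e:supinf}.''  But \eqref{e:supinf} only gives the one-sided embedding $\|f\|_{\psi_2}\lesssim (r-1)^{-1/2}\|f\|_r$; it does not identify the interpolation space, nor does it yield the claimed embedding constant.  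If one feeds this embedding into the $K$-functional one obtains
\[
K(t,f;L^{\psi_2},L^2)\le (r-1)^{-1/2}K\bigl(t(r-1)^{1/2},f;L^r,L^2\bigr),
\]
which after a change of variable gives $\|f\|_{(L^{\psi_2},L^2)_{\theta,s}}\le (r-1)^{-(1-\theta)/2}\|f\|_{(L^r,L^2)_{\theta,s}}$.  To reach $L^q$ on the right one must send $r\to 1^+$, and then the prefactor blows up.  Likewise, the log-convexity and level-set truncation routes you mention really do lose a factor $(\log J)^{1/q-1/2}$ rather than gaining the harmless $(q-1)^{-1/2}$, so they are not merely inelegant alternatives.

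It is worth stressing that the naive variant of \eqref{e:L2g}---replacing the $\psi_2$--$\exp(L^2)$ H\"older step by $L^q$--$L^{q'}$ duality and using $\langle P_{\mathbf G^j}\phi\rangle_{I,q'}\lesssim \sqrt{q'}\,\langle P_{\mathbf G^j}\phi\rangle_{I,2}$---yields only $\langle f_I\rangle_{I,2}\lesssim \sqrt{J}\,(q-1)^{-1/2}\langle f\rangle_{I_0,q}$, with the wrong power of $J$.  The sharp exponent $J^{1/q-1/2}$ is not optional: in \eqref{e:Tmg} the factors $J^{-1/q}\cdot\sqrt J\cdot J^{1/q-1/2}$ must cancel exactly.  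So the paper's hint is underspecified, and your proposal, while pointing in the right direction, leaves unproved the one estimate that separates Lemma~\ref{l:keyp} from Lemma~\ref{l:key}.  A complete argument would need either a rigorous Orlicz-endpoint interpolation theorem with tracked constants, or a direct martingale inequality of Rosenthal type exploiting that $T\phi=\sum_k g_k$ is a dyadic martingale with increments obeying $\|g_k\|_\infty\le \sqrt J\,\|g_k\|_2$ and $\sum_k\|g_k\|_2^2\le\|\phi\|_2^2$.
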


\section{Sparse Bounds for  Multipliers} \label{s:4}

In this section, we will prove the   estimate  
\begin{equation} \label{e:Two}
\lVert T _{m}\rVert _{ \psi _2, q} \lesssim \tfrac{\|m\|_{R_{q,1}}  }  {  \sqrt{q-1}}, \qquad  1< q \leq 2.  
\end{equation}
This is the inequality \eqref{e:Tq}. 
Due to the atomic nature of our multiplier spaces we may restrict ourselves to the case of $m$  being an $R_{q,1}$-atom with  at most $J$ jumps for some fixed but arbitrary $J \in \mathbb N$.  Recall that in this case $\|m\|_{R_{q,1}}=1 $.  

We need to verify that whenever   $1< q\leq 2$,   and 
 $ f$ and $ \phi $ are bounded functions on $ I_0 = [0,1)$,  there is a sparse collection of intervals $ \mathcal S$ 
so that 
\begin{equation*}
\lvert  \langle T_m f, \phi  \rangle\rvert \lesssim 
\frac{1}{ \sqrt{q-1}} 
\sum_{I\in \mathcal S}  \lvert  I\rvert 
\langle f \rangle _{I, \psi_2 }  \langle \phi  \rangle _{I,q} . 
\end{equation*}
The proof is recursive, and it suffices to prove the recursive step.  
We claim that there is a collection $ \mathcal I $ of disjoint subintervals $ I\subset I_0=[0,1]$, 
whose union has measure  at most $ \tfrac{1}2$, so that 
\begin{equation}\label{e:2show}
\lvert  \langle T_m f, \phi  \rangle\rvert 
\lesssim \frac1{\sqrt{q-1}}
\langle f \rangle _{I_0, \psi_2}  \langle \phi  \rangle _{I_0,q} 
+ \sum_{I\in \mathcal I}  \lvert  \langle T _{m_I} (f \mathbf 1_{I}) , \phi \mathbf 1_{I}  \rangle\rvert . 
\end{equation}
Since the induced multipliers $ m_I$ are again $R_{q,1}$  atoms with at most $ J$ jumps adapted to $I$, 
one can recurse on the last sum above.  
We apply the key Lemma~\ref{l:key} to the collection $\mathcal I$ of maximal dyadic subintervals of $I_0$ such that
\[
\max\left\{ \frac{\langle f\rangle _{I,\psi_2} }{\langle f \rangle _{I_0,\psi_2}},  \frac{\langle\phi \rangle _{I,q} }{\langle \phi \rangle _{I_0,q}}\right\} > 4. 
\]
 From this, we get 
a collection of disjoint  intervals $ \mathcal I$ whose union is at most $ \tfrac{1}2$, 
so that the functions  $ f$,  and $ \phi  $, as well as the  multiplier $ m$, 
have the decompositions \eqref{e:gb} described in Lemma \ref{l:key}. 
We then expand 
\begin{align}  \label{e:EX1}
\langle T _{m} f,  \phi \rangle 
& = \langle T _{m} \tilde f,  \tilde \phi \rangle  
\\ \label{e:EX2}
& \quad + \sum_{I\in \mathcal I} \langle T _{m} \tilde f , \phi_{I }'  \rangle 
+ \sum_{I\in \mathcal I} \langle T _{m} f_I'  ,  \tilde  \phi   \rangle
\\ \label{e:EX3}
& \quad+ \sum_{I\in \mathcal I} 
 \sum_{I'\in \mathcal I} 
\langle T _{m} f_I' , \phi_{I'}' \rangle .  
\end{align}
Note that 
$
 \lVert T _{m}\rVert _{2 \to 2 } \leq \lvert  J\rvert ^{- \frac{1}q}
$. 
Combine this estimate with \eqref{e:ftilde} for $f$ and with the  $ q$ equivalent formulation of \eqref{e:ftilde},  namely 
\begin{equation*}
\langle \tilde g \rangle _{I_0, 2} \lesssim \frac{\lvert J\rvert  ^{\frac1{q} - \frac1{2}}} {\sqrt{q -1}} \langle g \rangle _{I_0, q}
\end{equation*}
for $g=\phi$,  to see that 
\begin{align} \label{e:Tmg}
\lvert  \langle T _{m} \tilde f  ,   \tilde \phi   \rangle \rvert  
& \leq 
\lVert T_m\rVert _{2\to 2} \langle  \tilde f\rangle _{I_0, 2} \langle \tilde \phi \rangle _{I_0, 2} \lvert  I_0\rvert 
\lesssim   
  \frac1{\sqrt {q-1}} 
\langle f \rangle _{I_0,   \psi _2 }  \langle \phi  \rangle _{I_0, q}  \lvert  I_0\rvert. 
\end{align}
This controls the term on the right hand side of  \eqref{e:EX1}.  
The two terms in \eqref{e:EX2} are dual to each other.  Recall that  $ T _{m} f'_I = T _{m_I} f'_I  
= T _{m_I} (f \mathbf 1_{I})$.  And, all functions are supported on $ I$. Therefore, 
\begin{equation*}
 \langle T _{m} f_I'  ,     \tilde \phi    \mathbf 1_{I_0 \setminus I} \rangle 
 =  \langle T _{m_I} f_I'  ,     \tilde \phi    \mathbf 1_{I_0 \setminus I}  \rangle =0
\end{equation*}
since the last two functions are disjointly supported.  We also have 
\begin{equation*}
 \langle T _{m} f_I'  ,     \tilde \phi    \mathbf 1_{  I} \rangle  
 =  \langle T _{m_I} f_I' , \phi _I \rangle =0 
\end{equation*}
since the last two functions are in the linear span of the tiles $ \mathbf P (I)$, but are supported 
on disjoint tiles.  

It remains to consider the terms in \eqref{e:EX3}. But this double sum diagonalizes to 
\begin{equation*}
 \sum_{I\in \mathcal I} 
\langle T _{m} f_I' , \phi_{I}' \rangle
=
 \sum_{I\in \mathcal I} 
\langle T _{m_I} (f \mathbf 1_{I}) , \phi \mathbf 1_{I} \rangle. 
\end{equation*}
And, this is the sum we recurse on, completing the proof of \eqref{e:2show}.


\section{The Square Function Bounds} \label{s:6}
In this section, we first demonstrate the sparse bound \eqref{eq:Sm} for the Walsh-Littlewood-Paley square function $ S _{\lambda }$ as defined in \eqref{e:lam}.    At the end, we explain how this 
argument also proves \eqref{eq:ST}, the sparse bound for $ S_2 \circ T_m$.  

Let $ \{ \mu _k\}$ be any increasing sequence of powers of two. The square function 
\begin{equation*}
\Bigl[ \sum_{k} \lvert  T _{[ \mu _k , \mu _{k+1})} f\rvert^2  \Bigr] ^{1/2} 
\end{equation*}
is an example of a martingale square function, hence it satisfies a much stronger $ (1,1)$ sparse bound.  
(A pinpoint reference for this sparse bound does not seem to be readily available.  It is straight forward to modify 
 the proof for a $(1,1)$ sparse bound for martingale transforms in \cite{MR3625108}.)
 Subtracting  a choice of such a square function from $ S _{\lambda }$, we are left with a simpler 
problem.  

We phrase that problem here, with an eye to the recursive proof of the sparse bound.  
 Say that $ \Omega $ is a \emph{$ I$-good} collection of intervals if 
the intervals $ \omega \in \Omega $ are pairwise disjoint and take the 
the form   $\omega= [ 2 ^{k} , v_k )$, for some $ v_k < 2 ^{k+1}$ and some integer $k$ with $2^{k}|I| \in \mathbb N $.    
We shall prove:

\begin{proposition}\label{p:good} Fix $ 1\leq p \leq 2$.  For any $I_0$-good collection of intervals $ \Omega $, 
and all $ f, g \in L ^2 (I_0)$, 
there is a collection  $ \mathcal I$ of disjoint intervals $ I\subset I_0$ so that $ \bigcup  _{\mathcal I} I$ 
has measure at most $ \frac{1}2 \lvert  I_0\rvert $, and  
\begin{align}  \label{e:good}
\langle (S _{\Omega  } f) ^{p} , g  \rangle 
& \lesssim  \lvert  I_0\rvert \cdot  \langle f \rangle _{I_0, \psi _2} ^{p} \langle g \rangle _{I_0} + 
\sum_{I\in \mathcal I} \langle (S _{\Omega_I} f \mathbf 1_{I}) ^{p} , g \mathbf 1_{I}  \rangle . 
\end{align}
where $ \Omega _I = \{\omega ^{o} _I \;:\; \omega \in \Omega \}  $ is $ I$-good.   
\end{proposition}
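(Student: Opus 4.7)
My plan mirrors the proof in Section~\ref{s:4}, using the multi-frequency decomposition of Lemma~\ref{l:key}, and is organized as follows. First, I would choose the stopping collection $\mathcal{I}$ as the maximal dyadic subintervals $I \subset I_0$ at which either $\langle f\rangle_{I,\psi_2} > 4 \langle f\rangle_{I_0,\psi_2}$ or $\langle g\rangle_I > 4\langle g\rangle_{I_0}$. The standard Calder\'on--Zygmund argument then delivers $|\bigcup\mathcal{I}| \leq \tfrac12|I_0|$, the pointwise bound $|g| \lesssim \langle g\rangle_{I_0}$ a.e.\ on $I_0 \setminus \bigcup\mathcal{I}$, and verifies the hypothesis \eqref{e:condition} of Lemma~\ref{l:key} for $f$. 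Since $\Omega$ is $I_0$-good, $\mathbf{1}_{\bigcup\Omega}$ has at most one jump per dyadic frequency block $[2^k,2^{k+1})$, so Lemma~\ref{l:key} applies with $J=1$ and yields the decomposition $f = \tilde f + \sum_{I \in \mathcal{I}} f'_I$, $\tilde f = f_\infty + \sum_I f_I$, with the properties \eqref{e:fzI}--\eqref{e:Th} in force, in particular the $L^2$-control $\|\tilde f\|_2 \lesssim |I_0|^{1/2}\langle f\rangle_{I_0,\psi_2}$ coming from \eqref{e:ftilde}.

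The Walsh-side orthogonality at the heart of the argument is the identity $T_{\omega^o_I} f_I = 0$ for each $I \in \mathcal{I}$ and $\omega \in \Omega$. Indeed, each tile $p \in \mathbf{P}_m(I)$ defining $f_I$ has frequency interval $\omega_p$ straddling the boundary of some $\omega^\star \in \Omega$, and since distinct elements of $\Omega$ live in distinct dyadic blocks $[2^k,2^{k+1})$, $\omega_p$ is not a subset of $\omega^o_I$ for \emph{any} $\omega \in \Omega$; as $\omega_p$ and $\omega^o_I$ are both unions of dyadic intervals of length $|I|^{-1}$, this forces $\omega_p \cap \omega^o_I = \varnothing$, hence $T_{\omega^o_I} w_p = 0$. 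In particular, $S_{\Omega_I}(f\mathbf{1}_I) = S_{\Omega_I} f'_I$ on $I$. Together with the support structure of $f'_I$ and Minkowski's inequality in $\ell^2(\Omega)$, this yields the pointwise decomposition $S_\Omega f(x) = S_\Omega \tilde f(x)$ for $x \in I_0 \setminus \bigcup \mathcal{I}$ and $S_\Omega f(x) \leq S_\Omega \tilde f(x) + S_{\Omega_I}(f\mathbf{1}_I)(x)$ for $x \in I \in \mathcal{I}$. Raising to the $r$-th power with $r \in [1,2]$, applying $(a+b)^r \leq 2^{r-1}(a^r+b^r)$, and integrating against $g$ produces
\[
\int (S_\Omega f)^r g \lesssim \int (S_\Omega \tilde f)^r g + \sum_{I\in\mathcal{I}} \int_I \bigl(S_{\Omega_I}(f\mathbf{1}_I)\bigr)^r g,
\]
the second sum being the recursion in the statement.

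What remains is to control $\int (S_\Omega \tilde f)^r g$ by the principal quantity $|I_0| \langle f\rangle_{I_0,\psi_2}^r \langle g\rangle_{I_0}$. H\"older with exponents $2/r$ and $2/(2-r)$, the Plancherel $L^2$-bound $\|S_\Omega \tilde f\|_2 \leq \|\tilde f\|_2$, and the $L^2$-control of $\tilde f$ deliver $\int (S_\Omega \tilde f)^r \lesssim |I_0| \langle f\rangle_{I_0,\psi_2}^r$. On $I_0 \setminus \bigcup\mathcal{I}$ the pointwise $|g| \lesssim \langle g\rangle_{I_0}$ produces the desired principal bound immediately. The hard part will be the residual piece $\sum_{I} \int_I (S_\Omega \tilde f)^r g$, where $g$ is no longer pointwise controlled; my plan is to exploit the Haar square function estimate $\langle S_I f_I\rangle_{I,\infty} \lesssim \langle f\rangle_{I_0,\psi_2}$ from \eqref{e:SfI} together with the identification, up to Haar signs, of Walsh tiles in $\mathbf{P}(I)$ via \eqref{e:signWalsh}, to dominate the ``local'' contribution of $f_I$ to $S_\Omega\tilde f$ on $I$ by a constant multiple of $\langle f\rangle_{I_0,\psi_2}$, and handle the ``non-local'' contribution of $f_\infty$ and $\{f_{I'}\}_{I' \neq I}$ by an $L^2$-orthogonality argument paired with the $g$-stopping bound $\int_I g \leq C|I|\langle g\rangle_{I_0}$. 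This interplay of Walsh multiplier analysis with Haar square-function estimates, in the spirit of the Chang--Wilson--Wolff and Zygmund inequalities of Section~\ref{s:2}, is the main technical obstacle of the proof.
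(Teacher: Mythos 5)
Your setup (the stopping collection $\mathcal I$, verifying the hypotheses of Lemma~\ref{l:key} with $J=1$, the observation that $T_{\omega^o_I}f_I=0$ so that $S_{\Omega_I}(f\mathbf 1_I)=S_{\Omega_I}f'_I$ on $I$, and the principal bound $\int(S_\Omega\tilde f)^r g_\infty \lesssim |I_0|\langle f\rangle_{I_0,\psi_2}^r\langle g\rangle_{I_0}$ via H\"older and \eqref{e:ftilde}) is correct and matches the paper through the decomposition $f=\tilde f+\sum f'_I$ and $g=g_\infty+\sum g_I$.

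The gap is precisely where you say ``the hard part will be.'' Your plan for $\sum_{I}\int_I (S_\Omega\tilde f)^r g$ --- splitting the contribution on $I$ into a ``local'' piece from $f_I$ controlled by \eqref{e:SfI} and a ``non-local'' piece from $f_\infty$ and $f_{I'}$ controlled by $L^2$ orthogonality --- does not match the structure of the problem, and it is not clear it can be made to work. Two specific issues: first, \eqref{e:SfI} controls the \emph{Haar} square function $S_I(f_I)$, not $S_\Omega f_I$; $T_\omega f_I$ for $\omega\in\Omega$ is a genuinely different (half-open frequency) projection, and the translation between the two requires exactly the wave-packet algebra you are trying to avoid. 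Second, an $L^2$ bound for the non-local part does not interact cleanly with $g$ on $I$: you would need either a pointwise estimate for the non-local part of $S_\Omega\tilde f$ on $I$ or some other structural fact, and generic $L^2$ orthogonality supplies neither.

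The paper's route is both simpler and essential: the key observation is that $S_\Omega\tilde f$ is \emph{constant on each $I\in\mathcal I$}. This is seen by writing each projection in the modulated form $|T_{[2^k,v_k)}\tilde f|=|T_{[0,v_k-2^k)}M_{w_{2^k}}\tilde f|$ (equation \eqref{e:rep1}) and expanding $T_{[0,v_k-2^k)}$ as a sum of tile projections \eqref{e:==}. By construction of $\tilde f$, the tiles $p\in\mathbf P_k$ with $I_p\subset I$ contribute zero inner product; and the tiles with $I\subsetneq I_p$ are, up to the modulation $w_{v_k-2^k}$, signed Haar functions on $I_p\supsetneq I$ (this is \eqref{e:signWalsh}), whose modulus is constant on $I$. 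Hence each $|T_\omega\tilde f|$, and therefore $S_\Omega\tilde f$, is constant on $I$. With this in hand the residual piece is immediate: $\int_I(S_\Omega\tilde f)^p|g| = (S_\Omega\tilde f)^p|_I\,\int_I|g|\lesssim (S_\Omega\tilde f)^p|_I\,|I|\,\langle g\rangle_{I_0}$, and summing over the disjoint $I\in\mathcal I$ and using the $L^2$ bound on $S_\Omega\tilde f$ gives the principal quantity. You should replace your ``local/non-local'' plan with this constancy argument; without it, the proposal does not close.
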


\begin{proof} 
It is important to make this alternate description of $ S _{\Omega } f$. 
As $\Omega$ is $I_0$-good, we have $\Omega=\{[2^{k},v_k):k\in \mathbf{k}\} $ and the integers $k \in \mathbf{k} $ satisfy $2^k|I_0|\geq 1$.
Note that we have 
\begin{align}
(S _{\Omega } f ) ^2 & = \sum_{k  \in \mathbf k } \lvert  T _{[2 ^{k}, v_k)} f \rvert ^2 
\\ \label{e:rep1}
& =  \sum_{k  \in \mathbf k} \lvert  T _{[0, v_k -2 ^{k})} M _{w _{2 ^{k}}}f \rvert ^2 
\end{align}
where $ M _{\phi } f = \phi \cdot f$ is the multiplication operator.  That is, we can rescale the interval on which we are projecting by precomposing with a Walsh-Fourier multiplier.  

We carry this further. Recall the equivalence \eqref{e:WalshTile}.  For a collection of tiles $ \mathbf P_k$, we have 
\begin{equation} \label{e:==}
T _{[0, v_k -2 ^{k}) }  \phi = \sum_{p \in \mathbf P_k} \langle \phi , w_p \rangle w_p.  
\end{equation}
Given $ I\subset I_0$, we set $ \mathbf P _{k} (I) = \{p \in \mathbf P_k \;:\; \lvert  I_p\rvert < \lvert  I\rvert  \}$, 
and set $ \Omega (I) = \{ [ 2 ^{k}, v_k) \;:\; k \in \mathbf{k},\, 2 ^{k} \geq  \lvert  I\rvert ^{-1} \}$.

\bigskip 

Take $ \mathcal I$ to be the maximal intervals $ I\subset I_0$ so that $ \langle f \rangle _{I, \psi _2} > C 
\langle f \rangle _{I_0, \psi _2}$, or $ \langle g \rangle _{I} > C \langle g \rangle _{I_0}$. For $ C$ an absolute constant big enough the set $ E= \bigcup _{I\in \mathcal I } I$ has measure at most $\frac{1}4 \lvert  I_0\rvert $, 
and we can apply Lemma~\ref{l:key}.   
Applying Lemma \ref{l:key} to $\mathcal{I}$, one obtains a decomposition \[f= f _{\infty }+\sum_{I\in\mathcal{I}}f_I + f'_I\] satisfying $\text{supp }f_I,\ \text{supp }f_I'\subset I$ and 
\[
S_{\Omega } f'_I=S_{ \Omega _I} f'_I= \mathbf 1_{I}S_{ \Omega _I}(f\mathbf{1}_I),
\]
which immediately follows from (\ref{e:Th}) for each $T_{\omega _k }$. 
Recalling \eqref{e:ftilde}, and using subadditivity of $ x \to x ^{p/2}$ estimate 
\begin{equation}\label{e:Sf<}
(S _{\Omega } f )^p \leq (S _{\Omega } \tilde f )^p+ \sum_{I\in \mathcal I} (S _{\Omega_I } f'_I)^p . 
\end{equation}

Using a standard decomposition, we can write 
\[
 g = g  _{\infty }+\sum_{I\in\mathcal{I}} g  _I= g \mathbf{1}_{I_0\setminus \cup_{I\in\mathcal{I}}I}+\sum_{I\in\mathcal{I}} g \mathbf{1}_I,
\] where the terms above satisfy 
\[
\|  g  _{\infty }\|_{L^\infty}\lesssim \langle  g  _I\rangle_{I_0,1},\qquad 
\langle  g  _I\rangle_{I,1}\lesssim \langle  g \rangle_{I_0,1},\qquad \text{supp } g  _I\subset I.
\]
Combining these estimates,   we claim that 
\begin{align}  \label{e:SQ1} &
\lvert \langle (S_{\Omega }  \tilde f ) ^{p},   g  _{\infty } \rangle  \rvert\lesssim |I_0| \langle f\rangle_{I_0,\psi_2} ^{p}\langle  g \rangle_{I_0,1},
\\ &\label{e:SQ3}
\sum_{I\in\mathcal{I}}\lvert \langle (S_{\Omega } \tilde  f) ^{p}  ,  g  _I\rangle \rvert
\lesssim  \langle f\rangle_{I_0,\psi_2} ^{p}\sum_{I\in\mathcal{I}}|I|\langle  g  _I\rangle_{I,1}\lesssim \langle f\rangle_{I_0,\psi_2}^{p}\langle  g \rangle_{I_0,1} \lvert  I_0\rvert. 
\end{align}
Indeed, these two inequalities immediately complete the proof of the Proposition.    

\medskip 

The inequality \eqref{e:SQ1} is seen  as follows. Using H\"older's inequality, 
\begin{align*}
\lvert \langle (S_{\Omega }   \tilde f  ) ^{p},   g  _{\infty } \rangle  
& \leq   \lVert g _{ \infty }\rVert _{ (2/p)'  }
\lVert S _{\Omega } \tilde f\rVert _{2} ^{p}
\end{align*}
which completes the proof of \eqref{e:SQ1} in view of the obvious $ L ^2 $ estimate for $ S _{\Omega }$ and \eqref{e:ftilde}.  

The second inequality \eqref{e:SQ3} depends upon this essential point: For each $ I\in \mathcal I$, the function $ S _{\Omega } \tilde f$ is constant on $ I$.  

To see this, return to the representation \eqref{e:rep1} and \eqref{e:==}. 
For each $ k \in \mathbf k$, we have 
\begin{equation*}
T _{[0, v_k -2 ^{k}) }  M _{w _{2 ^{k}}} \tilde f = \sum_{p \in \mathbf P_k} \langle  M _{w _{2 ^{k}}} \tilde f , w_p \rangle w_p.  
\end{equation*}
Fix $ I\in \mathcal I$.  If $ p \in \mathbf P_k$ with $ I_p \subset I$, it follows from construction of $ \tilde f$ that the inner product $ \langle  M _{w _{2 ^{k}}} \tilde f , w_p \rangle$ is zero.  And, if $ I\subsetneq I_p$, it follows that 
function $ w _{v _{k} -2 ^{k}} w _{p}$  is a signed Haar function.   That is, the function above is constant on $ I$. 
With this claim established, note that \eqref{e:SQ3} then follows from the same reasoning for \eqref{e:SQ1}.  
\end{proof}
 
\begin{proof}[Proof of \eqref{eq:ST}]  
To  prove this estimate for $ S_2 \circ T_m$, it suffices to restrict attention to a multiplier $ m$ 
of the form $ \sum_{k} \sigma _k \mathbf 1_{[2 ^{k}, \nu _k)}$ for some $ 2 ^{k}< \nu _k \leq 2 ^{k+1}$, and $ \sigma _k \in \{-1,0,1\}$. 
Then, $ S_2 \circ T_m$ is no more than the square function considered in Proposition~\ref{p:good}. 
We see that the claimed sparse bound holds.  
\end{proof}

\section{Proof of   Corollary \ref{c:lerner}} \label{s:weight}

We will use in what follows the by now widespread language of $A_p$, $A_\infty$ and Reverse H\"older classes; for the relevant definitions, see for instance \cite{MR3455749,MR3563275} and references therein.

\subsection{Proof of the $ L (\log L) ^{1/2} (w) \to  L ^{1, \infty } (w)$ Bound}

We prove \eqref{e:cP2}.  This argument is known, and we do not seek to make this argument quantitative.  
By the restricted weak type approach, given $ f\in L (\log L ) ^{1/2} (w)$, and  $ G \subset [0,1]$, we select $ G'\subset G$ with $ 2w (G') > w (G)$ so that 
for any $ \lvert  g\rvert = \mathbf 1_{G'} $
\begin{equation}\label{e:cP21}
\lvert \langle T_m f, g w\rangle\rvert \lesssim C _{[w] _{A_1}} \lVert f\rVert _{L (\log L) ^{1/2} (w)}. 
\end{equation}

We can assume that $  \lVert f\rVert _{L (\log L) ^{1/2} (w)} =1$.  Define a  weighted Orlicz maximal function 
\begin{equation*}
M _{\psi _2 } ^{w} f = \sup _{I} \mathbf 1_{I}  \langle f \rangle _{I, \psi _2} ^{w}. 
\end{equation*}
The superscript $ w$ indicates that we integrate with respect to $ w $ measure: 
$ \langle f \rangle _{I, \psi _2} ^{w} =   \lVert f \mathbf 1_{I} \rVert _{\psi _2 (L) (\frac{w (dx)} {w (I)})}$. 
It is straight forward to verify that 
\begin{equation*}
\lVert M _{\psi _2} ^{w} f\rVert _{L ^{1, \infty }   (w)} \lesssim  \lVert f\rVert _{L(\log L) ^{1/2} (w)}. 
\end{equation*}
Indeed, given threshold $ t >1$, let $ \mathcal I $ be the maximal dyadic intervals with $ \langle f \rangle _{L (\log L) ^{1/2} } > t$. Then, 
\begin{align*}
w (M _{\psi _2} ^wf >t )  &\lesssim  \sum_{I\in \mathcal I} \int _{I} \psi _2 ( f /t) \;w(dx) 
\\
& = \sum_{I\in \mathcal I} \int _{I} \lvert  f/t\rvert \sqrt {\log_+ f/t} \; w (dx)  
\lesssim t ^{-1} 
\langle f \rangle _{L (\log L) ^{1/2}  (w)}. 
\end{align*}

\smallskip 

Then, set $ G' = G \setminus \{ M _{\psi _2} ^{w}f  > K/   w (G)\}$.  
For a choice of absolute constant $ K$, we have $ 2 w (G') > w (G)$.  
Further, there is a choice of $ q_0 -1 \simeq [w] _{A_1} ^{-1} $ for which $ w \in RH_ {q_0}$, in particular 
\begin{equation*}
\langle w \rangle _{I, q_0} \leq 2 \langle w \rangle_I .  
\end{equation*}
Let $ q = (1 + q_0)/2$.  
Apply the $ \psi _2$-$L^q$ sparse bound \eqref{e:Tq} with this choice of $ q$. Then, 
\begin{align} \label{e:cP22}
\langle T_mf  , g  w \rangle  & \lesssim [w] _{A_1}  ^{1/2} 
\sum_{I\in \mathcal I} \langle f  \rangle _{I, \psi _2} \langle g w \rangle _{I,q} \lvert  I\rvert 
\end{align} 

The individual summands are addressed this way.  First, by the definition of $ A_1$, 
\begin{equation}\label{e:cP23}
\langle f  \rangle _{I, \psi _2}  \lesssim 
\frac{1} {\lvert  I\rvert } \int _{I} \psi _2 (f ) \; dx \cdot \frac{w (I)} {w (I)} 
\leq [w] _{A_1} \langle f  \rangle _{I, \psi _2} ^{w}.  
\end{equation}
Second, we claim that  for some $ 0 < \rho = \rho _{[w] _{A_1}} < 1$, there holds 
\begin{equation}\label{e:cP24}
  \langle g w \rangle _{I,q}  \lvert  I\rvert \lesssim  C _{[w] _{A_1}}  (\langle g \rangle _{I } ^{w }) ^ \rho  w (I).    
\end{equation}
Indeed,  note that the reverse H\"older property and choice of $ q$ imply that the weight $ w ^{ q-1}$ is in the class $ A _{\infty } (w)$, 
the latter being the Muckenhoupt class $ A _{\infty }$ relative to weight $ w$.  It follows that $ w ^{q}$ is quantitatively absolutely continuous 
with respect to $ w$.  That is, for a choice of $ \rho$ as in \eqref{e:cP24}, there holds 
\begin{equation*}
\langle  g \rangle _{I,q} ^{w ^{q}} \lesssim ( \langle g \rangle _{I} ^{w} ) ^{\rho }.  
\end{equation*}
Then, appealing to the reverse H\"older property again,  we have 
\begin{align*}
 \langle g w \rangle _{I,q}  \lvert  I\rvert  & = \langle  g \rangle _{I,q} ^{w ^{q}}  \langle w  \rangle _{I,q}\lvert  I\rvert 
 \\
 & \lesssim     ( \langle g \rangle _{I} ^{w} ) ^{\rho }  \langle w  \rangle _{I}\lvert  I\rvert  =  \langle  g \rangle _{I,q} ^{w ^{q}}   w (I). 
\end{align*}
This proves \eqref{e:cP24}.  

\smallskip 
Combine \eqref{e:cP23} and \eqref{e:cP24} so see that 
\begin{align} \label{e:cP25}
\eqref{e:cP21} 
&\lesssim    C _{[w] _{A_1}} \sum_{I\in \mathcal I} \langle f  \rangle _{I, \psi _2} ^{w}
\langle g \rangle _{I } ^{w } w (I) .
\end{align}
Turn to a pigeonholing argument.  
Divide the collection $ \mathcal I$ into collections $ \mathcal I _{j,k}$, for $ j,k\geq 1$.  An interval $ I\in \mathcal I _{j,k}$ by the conditions  
\begin{gather*}
2 ^{-j} K /w (G) <  \langle f  \rangle _{I, \psi _2} ^{w} \leq 2 ^{-j+1} K /w (G), 
\\
\textup{and} \quad 
2 ^{-k}   <  \langle g  \rangle_{I } ^{w } \leq 2 ^{-k+1}  . 
\end{gather*}
Observe that 
\begin{equation*}
\sum_{I\in \mathcal I _{j,k}} w (I) 
\lesssim [w] _{A _{\infty }}\sum_{I\in \mathcal I _{j,k} ^{\ast} } w (I) 
\lesssim \min \{  2 ^{j} ,  2 ^{  k } \} w (G). 
\end{equation*}
We then conclude that 
\begin{equation*}
\eqref{e:cP25} \lesssim \sum_{j,k \geq 1}  2 ^{-j- \rho k}\min \{  2 ^{j} ,  2 ^{  k } \} 
\lesssim C _{[w] _{A_1}} .   
\end{equation*}
And this completes the proof.
\subsection{Proof of the $ L ^{p}  (w) \to L ^{p} (w)$ for $ R _{q,1}$ Multipliers}
The inequalities \eqref{e:Tqq} are a direct consequence of our sparse bound for $ R _{q,1}$ multipliers and bounds for sparse operators.  
In particular, we need only cite  \cite{MR3531367}*{Prop. 6.4}.  The interested reader can easily track quantitative dependence on characteristics for the weight.  We suppress the details.

\subsection{Proof of the Square Function Bound }
It suffices to prove the weighted inequalities in \eqref{e:cS} for $ p=5/2$, 
as the full sharp range may be obtained by extrapolation, as   noted by A.~Lerner in \cite{180306981}.  

Fix bounded functions $ f,g$, and let $ w \in A _{5/2}$.
We should prove 
\begin{equation}\label{e:SM}
\langle  (S _{\Omega } f \sigma ) ^{3/2}, g w \rangle \lesssim [w] _{A_p} ^{3/2} 
\lVert f\rVert _{L ^{5/2} ( \sigma)} ^{ 3/2} \lVert g\rVert _{L ^{5/2} (w )}.  
\end{equation}
where $ \sigma = w ^{1- p'} = w^{-3/2}$ is the dual measure to $ w$.

Recall that $ \sigma \in A _{5/3}$, and $ [\sigma ] _{A _{5/3}} = [w] _{A_p} ^{2/3}$.  
Fix $ q -1 = c [w] _{A_p} ^{- 2/3} $, for a sufficiently small constant $ c$.  In particular, $ q$ is close enough to $ 1$ so that 
\begin{equation}  \label{e:RH1}
\langle \sigma  \rangle _{I, 1 + (q-1)(5/2q)'} \leq  
\langle \sigma  \rangle _{I, q} \leq 2 \langle \sigma  \rangle_I,
\end{equation}
We apply the sparse bound \eqref{eq:Sm}, although that inequality is stated in its $ \psi _2$ version, and we rather use the $ L ^{q}$ version:  For a sparse collection of intervals $ \mathcal I$, 
\begin{align} \label{e:SM1}
\langle  (S _{\Omega } f \sigma  ) ^{3/2}, g w  \rangle 
\lesssim   (q-1)^{- \frac{3}4}
\sum_{I\in \mathcal I} \langle  \sigma f  \rangle _{I,q} ^{3/2} \langle g w \rangle _{I} \cdot \lvert  I\rvert 
\end{align}
Note that $  (q-1)^{- \frac{3}4} \lesssim [w] _{A_p} ^{ \frac{1}2}$.  We continue with the analysis of the sum above.

Below, we will take averages of $ f$ relative to the weight $ \sigma ^{q}$, and employ a standard sparse trick. 
Note that we gain an additional power of $ [w] _{A_ {5/2}}$, giving us the $ 3/2$ power in \eqref{e:SM}.  
\begin{align*}
\sum_{I\in \mathcal I} \langle  f  \sigma  \rangle _{I,q} ^{3/2} \langle g w\rangle _{I} \cdot \lvert  I\rvert  
& = 
\sum_{I\in \mathcal I}( \langle f  \rangle _{I,q} ^{\sigma ^{q}}) ^{3/2} 
\langle \sigma  \rangle_{I,q} ^{3/2} \langle w \rangle_I
\langle g \rangle _{I} ^{w}\cdot \lvert  I\rvert    \qquad \textup{(Insert new averages)}
\\
& \lesssim 
\sum_{I\in \mathcal I}( \langle f  \rangle _{I,q} ^{\sigma ^{q}}) ^{3/2} 
\langle \sigma  \rangle_{I} ^{3/2} \langle w \rangle_I
\langle g \rangle _{I} ^{w}\cdot \lvert  I\rvert    \qquad  \textup{(By \eqref{e:RH1})}
\\
& \lesssim [w] _{A _{5/2}} 
\sum_{I\in \mathcal I}( \langle f  \rangle _{I,q} ^{\sigma ^{q}}) ^{3/2} 
\langle g \rangle _{I} ^{w}\cdot \lvert E_  I\rvert    \qquad \textup{(Defn. of $ A_ {5/2}$)}
\\
& \leq [w] _{A _{5/2}} \int 
\sum_{I\in \mathcal I}( \langle f  \rangle _{I,q} ^{\sigma ^{q} }) ^{3/2} 
\langle g \rangle _{I} ^{w}\cdot \mathbf 1_{E_I} \cdot  \sigma ^{3/5} w ^{2/5} \;dx  
\\
&\leq [w] _{A _{5/2}} 
\int   (M ^{\sigma ^{q}}  \lvert  f\rvert ^{q} ) ^{3/2q} \sigma ^{3/5}  \times M ^{w} g \cdot w ^{2/5} \; dx  
\\ 
& \leq  [w] _{A _{5/2}}  
\lVert  (M ^{\sigma ^{q}}  \lvert  f\rvert ^{q}) ^{3/2q} \rVert _{L ^{5/3} (\sigma )} 
\lVert M ^{w}g \rVert _{L ^{5/2} (w)}  \qquad \textup{(H\"older)}
\\ & 
\lesssim 
 [w] _{A _{5/2}}  \lVert  M ^{\sigma  ^{q}}  \lvert  f\rvert ^{q}  \rVert _{L ^{5/2q} (\sigma )} ^{3/2q}  \lVert g\rVert _{L ^{5/2} (w)}. 
\end{align*}
In the last line, it  is essential to note that on the function $ g$ we have used the `universal' or `martingale' maximal function estimate.  But, we have no such immediate recourse for the function $ f$.  

\smallskip 

This inequality will complete the proof of \eqref{e:SM}.  
\begin{equation}\label{e:SM2}
 \lVert  M ^{\sigma  ^{q}}  \phi  \rVert _{L ^{5/2q} (\sigma )}   \lesssim \lVert \phi \rVert _{L ^{5/2q} (\sigma )}. 
\end{equation}
But note that we are taking the $ \sigma ^{q}$ weighted dyadic maximal function on the left. 
To prove this inequality, we use the theory of $ A_p$ weights, relative to a non-Lebesgue measure. 
The relative measure is $ \sigma ^{q}$.  The definition of $ A_{5/2q} (\sigma  ^{q})$, given 
weight $ v (x) d \sigma ^{q}$, is 
\begin{equation*}
[v] _{A_{5/2q} (\sigma ^{q})} = \sup _{I} \langle v \rangle ^{\sigma ^{q}}_I \bigl[  \langle v ^{1- (5/2q)'} \rangle _{I} ^{\sigma ^{q}} \bigr] ^{5/2q-1}.  
\end{equation*}
Above, $ v ^{1-(5/2q)'}$ is the `dual weight.'  
In our case, the weight is $ \sigma ^{1-q}$.  The `dual weight' is $ \mu = \sigma ^{(1-q) (1-(5/2q)')}$. 
We check that $ v \in A _{5/2q} (\sigma ^{q})$, and that moreover, its $ A_{5/2q} (\sigma ^{q})$ characteristic is bounded by a constant.  By the martingale version of the Muckenhoupt theorem, we see that \eqref{e:SM2} holds. 

\smallskip 

For fixed interval $ I$, the term below should be bounded by a constant: 
\begin{align*}
\langle \sigma ^{1-q} \rangle ^{\sigma ^{q}}_I \bigl[  \langle  \mu  \rangle _{I} ^{\sigma ^{q}} \bigr] ^{5/2q -1 }
& = 
\frac{ \sigma (I)} {\sigma ^{q} (I)} \cdot 
\Bigl[ \frac{\mu \cdot \sigma ^{q} (I)} {\sigma ^{q} (I)} \Bigr] ^{5/2q-1} 
\\
& = \frac{ \sigma (I)  [ \mu \cdot \sigma ^{q} (I) ] ^{5/2q-1}}   
{ [\sigma ^{q} (I) ] ^{5/2q}}
\\
& = \frac{ \sigma (I)  [   \sigma ^{1 + (q-1)(5/2q)'} (I) ] ^{5/2q-1}}   
{ [\sigma ^{q} (I) ] ^{5/2q}}
\\
& \lesssim 
\frac{ \sigma (I)  \{   \sigma(I) ^{1 + (q-1)(5/2q)'} \lvert  I\rvert ^{(1-q)(5/2q)'} \} ^{5/2q-1}}   
{ [\sigma   (I) ^{q} \lvert  I\rvert ^{1-q}  ] ^{5/2q}}   
= 1. 
\end{align*}
In the last line, we have applied our reverse H\"older inequalities, \eqref{e:RH1}. 
Last of all,  equality follows by inspection.  

\subsection{Proof of the $ L ^{p} (w) \to L ^{p} (w)$ Bound for Marcinkie\-wicz Multipliers}

As is noted by Lerner \cite{180306981}*{Conjecture 5.2}, it is enough to prove 
\eqref{e:cLp} in the case of $ p= \frac{5}2$, as an extrapolation argument (recalled in \cite{180306981}*{Thm. 2.8})
gives the desired bound for $ \frac{5}2 \leq p < \infty $, and then use duality to deduce the range 
$ 1 < p \leq \frac{5}3$.

We need only show that 
\begin{equation*}
\lVert T_m\rVert _{L ^{5/2} (w) \to L ^{5/2} (w)} \lesssim [w] ^{3/2} _{A_{5/2}} \lVert m\rVert _{M}
\end{equation*}
But this is easy to do, following an argument of Lerner.  
Appeal to a weighted inequality of Wilson \cite{MR972707,MR2359017} to  estimate 
the weighted norm of $ T_m$ in terms of $ S_2 \circ T_m$:  
\begin{align*}
\lVert T_m\rVert _{L ^{5/2} (w) \to L ^{5/2} (w)}  
& \lesssim [w] _{A _{5/2}} ^{\frac{1}2} 
\lVert  S_2 \circ T_m\rVert _{L ^{5/2} (w) \to L ^{5/2} (w)}  . 
\end{align*}
The latter operator satisfies the same sparse bounds as does the square functions $ S _{\lambda }$, 
as we see by comparing \eqref{eq:Sm} and  \eqref{eq:ST}.  
In particular, by the proof of \eqref{e:cS}, which we just completed, we see that 
\begin{equation*}
\lVert  S_2 \circ T_m\rVert _{L ^{5/2} (w) \to L ^{5/2} (w)}  \lesssim 
[w]  _{A_{5/2}} \lVert m\rVert _{M}. 
\end{equation*}
So our proof is complete.

\section{Proof of the Lower Bound on Sparse Forms} \label{s:lower}

We prove \eqref{e:reverse}.  

\begin{proposition}\label{p:lerner}   There is a Marcinkiewicz multiplier $ m$,  so that for $ 1< q  < 2$,  there is a pair of functions $ f, g$ so that 
\begin{equation*}
\lvert  \langle T_m f,g \rangle\rvert  \gtrsim \frac{1} {q-1} \sup _{\mathcal S} \sum _{I\in \mathcal S} \lvert  I\rvert \langle f \rangle_{I,q} \langle g \rangle _{I,q} . 
\end{equation*}
The supremum is over all sparse collections of intervals $ \mathcal S$. 
\end{proposition}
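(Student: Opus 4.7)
The plan is to verify \eqref{e:reverse} via an explicit construction that saturates the sparse $(q,q)$-form at the rate $(q-1)^{-1}$. For the multiplier, I would take the indicator of the Rademacher-Walsh frequencies, that is $m(n) = 1$ precisely when $n$ is a power of $2$, and $m(n) = 0$ otherwise. Since on each dyadic block $[2^j, 2^{j+1})$ the restriction of $m$ equals $\mathbf{1}_{\{2^j\}}$, a jump function of total variation $2$, the Marcinkiewicz norm $\|m\|_M$ is bounded by an absolute constant by Proposition~\ref{p:M}, and the operator $T_m$ projects onto the linear span of the Rademacher characters $w_{2^k}$.

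For each fixed $q \in (1, 2)$, I would choose $N \simeq (q-1)^{-1}$ and define the test pair as two identical $2^N$-tall bumps placed at opposite dyadic corners of $[0, 1)$:
$$f = 2^N \mathbf{1}_{[0,\, 2^{-N})}, \qquad g = -\, 2^N \mathbf{1}_{[1 - 2^{-N},\, 1)}.$$
A direct Walsh-Fourier computation gives $\widehat f(n) = \mathbf{1}_{\{n < 2^N\}}$, so that $T_m f = \sum_{k=0}^{N-1} w_{2^k}$. Since the binary expansion of $1 - 2^{-N}$ has its first $N$ digits equal to $1$, each $w_{2^k}$ with $k < N$ is identically $-1$ on the rightmost dyadic interval of length $2^{-N}$, yielding $|\langle T_m f, g\rangle| = N \simeq (q-1)^{-1}$. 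Both $f$ and $g$ have $L^q$ norm equal to $2^{N(q-1)/q}$, which remains bounded by an absolute constant for the chosen $N$.

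The heart of the argument is the following geometric observation. Since $\mathrm{supp}\, f$ and $\mathrm{supp}\, g$ lie inside the two extreme dyadic subintervals of $[0, 1)$, every proper subinterval $I \subsetneq [0, 1)$ fails to contain one of the two supports, forcing $\langle f\rangle_{I, q} \langle g\rangle_{I, q} = 0$. Consequently, in the sparse form only the single interval $I = [0, 1)$ contributes, and for every sparse collection $\mathcal{S}$ one has
$$\sum_{I \in \mathcal{S}} |I|\, \langle f\rangle_{I, q}\, \langle g\rangle_{I, q} \leq \|f\|_q \|g\|_q = 2^{2N(q-1)/q} \lesssim 1.$$
Combining this with $|\langle T_m f, g\rangle| \simeq (q-1)^{-1}$ gives the stated lower bound on the sparse norm.

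The main obstacle is conceptual rather than computational: pairing $f$ with itself would produce a huge sparse-sum contribution from the single small-scale interval $[0, 2^{-N})$, on which $\langle f \rangle_{I, q} = 2^N$, driving the ratio down to order $1$ rather than $(q-1)^{-1}$. The trick of placing the two bumps at opposite dyadic extremes of $[0, 1)$ is what kills all small-scale contributions to the sparse sum and collapses the analysis to a single scale. Once this geometric insight is in place, the rest is a routine Walsh-Fourier computation together with a standard binary-digit evaluation of $w_{2^k}(1 - 2^{-N})$.
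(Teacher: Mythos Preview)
Your argument is correct and essentially identical to the paper's own proof: the same Rademacher projection multiplier, the same bump $f=2^N\mathbf 1_{[0,2^{-N})}$, and the same dual bump supported on $[1-2^{-N},1)$, with the same ``opposite-corners'' observation collapsing the sparse sum to the single interval $[0,1)$. The only cosmetic differences are that the paper leaves $g$ unnormalized (as $\mathbf 1_{(1-2^{-n},1]}$ rather than $2^N$ times it) and starts the Rademacher sum at $k=1$ rather than $k=0$; neither affects the estimate.
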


\begin{proof}
The Marcinkiewicz multiplier is 
\begin{equation*}
T_m f = \sum_{k=1} ^{\infty } \langle f, w _{2 ^{k}} \rangle w _{2 ^{k}}. 
\end{equation*}
 Recall that $ \{w _{2 ^{k}} \;:\; k\geq 1 \}$  is a Rademacher sequence.  
With $ q-1 \simeq 1/n$, for integer $ n$,  take  $ f = 2 ^{n}\mathbf 1_{[0, 2 ^{-n})}$,  from which we see that 
 that $ T_m f = \sum_{k=1} ^{n}  w _{2 ^{k}}$.  
 
 Take $ g $ to be the indicator of $ \{T_m f = -n\} $, which is the interval $ (1- 2 ^{-n},1]$.  Then, $ \langle T_m f, g \rangle = - n 2 ^{-n}$. 
The functions  $ f$ and $ g$ are supported on opposite ends of $ [0,1]$.   That means that 
the only sparse collection we need consider is $ \mathcal S = \{[0,1]\}$. 
Note that 
\begin{equation*}
2 ^{n} \lVert g\rVert_q = \lVert f\rVert _{q} = \bigl[ 2 ^{nq -n} \bigr] ^{1/q} \simeq 1. 
\end{equation*}
That is, we have $ \lvert  \langle T_m f,g \rangle\rvert \gtrsim (q-1) ^{-1} \lVert f\rVert_q \lVert g\rVert_q $. 
This completes the proof. 
\end{proof}

\bibliographystyle{alpha,amsplain}

\begin{bibdiv}
\begin{biblist}

\bib{MR3531367}{article}{
      author={Bernicot, Fr{\'e}d{\'e}ric},
      author={Frey, Dorothee},
      author={Petermichl, Stefanie},
       title={Sharp weighted norm estimates beyond {C}alder\'on-{Z}ygmund
  theory},
        date={2016},
        ISSN={2157-5045},
     journal={Anal. PDE},
      volume={9},
      number={5},
       pages={1079\ndash 1113},
  url={http://dx.doi.org.prx.library.gatech.edu/10.2140/apde.2016.9.1079},
      review={\MR{3531367}},
}

\bib{MR800004}{article}{
      author={Chang, S.-Y.~A.},
      author={Wilson, J.~M.},
      author={Wolff, T.~H.},
       title={Some weighted norm inequalities concerning the {S}chr\"odinger
  operators},
        date={1985},
        ISSN={0010-2571},
     journal={Comment. Math. Helv.},
      volume={60},
      number={2},
       pages={217\ndash 246},
         url={https://doi-org.prx.library.gatech.edu/10.1007/BF02567411},
      review={\MR{800004}},
}

\bib{CoCuDPOu}{article}{
      author={Conde-Alonso, Jos\'e~M.},
      author={Culiuc, Amalia},
      author={Di~Plinio, Francesco},
      author={Ou, Yumeng},
       title={A sparse domination principle for rough singular integrals},
        date={2017},
        ISSN={2157-5045},
     journal={Anal. PDE},
      volume={10},
      number={5},
       pages={1255\ndash 1284},
         url={http://dx.doi.org/10.2140/apde.2017.10.1255},
      review={\MR{3668591}},
}

\bib{160305317}{article}{
          author={Culiuc, Amalia},
      author={Di~Plinio, Francesco},
      author={Ou, Yumeng},
     TITLE = {Domination of multilinear singular integrals by positive
              sparse forms},
   JOURNAL = {J. Lond. Math. Soc. (2)},
  FJOURNAL = {Journal of the London Mathematical Society. Second Series},
    VOLUME = {98},
      YEAR = {2018},
    NUMBER = {2},
     PAGES = {369--392},
      ISSN = {0024-6107},
   MRCLASS = {42B20 (42B25)},
  MRNUMBER = {3873113},
MRREVIEWER = {Luc Del\'{e}aval},
       DOI = {10.1112/jlms.12139},
       URL = {https://doi.org/10.1112/jlms.12139},
}

\bib{MR3189278}{article}{
      author={Di~Plinio, Francesco},
       title={Lacunary {F}ourier and {W}alsh-{F}ourier series near {$L^1$}},
        date={2014},
        ISSN={0010-0757},
     journal={Collect. Math.},
      volume={65},
      number={2},
       pages={219\ndash 232},
         url={https://doi.org/10.1007/s13348-013-0094-3},
      review={\MR{3189278}},
}

\bib{DPLer2013}{article}{
      author={Di~Plinio, Francesco},
      author={Lerner, Andrei~K.},
       title={On weighted norm inequalities for the {C}arleson and
  {W}alsh-{C}arleson operator},
        date={2014},
        ISSN={0024-6107},
     journal={J. Lond. Math. Soc. (2)},
      volume={90},
      number={3},
       pages={654\ndash 674},
         url={http://dx.doi.org/10.1112/jlms/jdu049},
      review={\MR{3291794}},
}

\bib{MR2914604}{article}{
      author={Do, Yen~Q.},
      author={Lacey, Michael~T.},
       title={On the convergence of lacunary {W}alsh-{F}ourier series},
        date={2012},
        ISSN={0024-6093},
     journal={Bull. Lond. Math. Soc.},
      volume={44},
      number={2},
       pages={241\ndash 254},
         url={https://doi-org.prx.library.gatech.edu/10.1112/blms/bdr088},
      review={\MR{2914604}},
}

\bib{MR3455749}{article}{
      author={Domingo-Salazar, Carlos},
      author={Lacey, Michael},
      author={Rey, Guillermo},
       title={Borderline weak-type estimates for singular integrals and square
  functions},
        date={2016},
        ISSN={0024-6093},
     journal={Bull. Lond. Math. Soc.},
      volume={48},
      number={1},
       pages={63\ndash 73},
         url={https://doi.org/10.1112/blms/bdv090},
      review={\MR{3455749}},
}

\bib{MR3084406}{article}{
      author={Hyt\"onen, Tuomas~P.},
      author={Lacey, Michael~T.},
      author={Parissis, Ioannis},
       title={The vector valued quartile operator},
        date={2013},
        ISSN={0010-0757},
     journal={Collect. Math.},
      volume={64},
      number={3},
       pages={427\ndash 454},
         url={https://doi.org/10.1007/s13348-012-0070-3},
      review={\MR{3084406}},
}

\bib{MR3625108}{article}{
      author={Lacey, Michael~T.},
       title={An elementary proof of the {$A_2$} bound},
        date={2017},
        ISSN={0021-2172},
     journal={Israel J. Math.},
      volume={217},
      number={1},
       pages={181\ndash 195},
  url={http://dx.doi.org.prx.library.gatech.edu/10.1007/s11856-017-1442-x},
      review={\MR{3625108}},
}

\bib{MR3563275}{article}{
      author={Lacey, Michael~T.},
      author={Li, Kangwei},
       title={On {$A_p$}--{$A_\infty$} type estimates for square functions},
        date={2016},
        ISSN={0025-5874},
     journal={Math. Z.},
      volume={284},
      number={3-4},
       pages={1211\ndash 1222},
         url={https://doi.org/10.1007/s00209-016-1696-8},
      review={\MR{3563275}},
}

\bib{MR3647935}{article}{
      author={Lacey, Michael~T.},
      author={Mena~Arias, Dar{\'\i}o},
       title={The sparse {T}1 theorem},
        date={2017},
        ISSN={0362-1588},
     journal={Houston J. Math.},
      volume={43},
      number={1},
       pages={111\ndash 127},
      review={\MR{3647935}},
}

\bib{180306981}{article}{
      author={{Lerner}, A.~K.},
       title={{Quantitative weighted estimates for the Littlewood-Paley square
  function and Marcinkiewicz multipliers}},
        date={2018-03},
     journal={Math.\ Res.\ Lett.\, to appear, ArXiv e-prints},
      eprint={1803.06981},
}

\bib{MR2997005}{article}{
      author={Oberlin, Richard},
      author={Thiele, Christoph},
       title={New uniform bounds for a {W}alsh model of the bilinear {H}ilbert
  transform},
        date={2011},
        ISSN={0022-2518},
     journal={Indiana Univ. Math. J.},
      volume={60},
      number={5},
       pages={1693\ndash 1712},
  url={https://doi-org.prx.library.gatech.edu/10.1512/iumj.2011.60.4445},
      review={\MR{2997005}},
}

\bib{MR1839769}{article}{
      author={Seeger, Andreas},
      author={Tao, Terence},
       title={Sharp {L}orentz space estimates for rough operators},
        date={2001},
        ISSN={0025-5831},
     journal={Math. Ann.},
      volume={320},
      number={2},
       pages={381\ndash 415},
         url={https://doi-org.prx.library.gatech.edu/10.1007/PL00004479},
      review={\MR{1839769}},
}

\bib{MR1900894}{article}{
      author={Tao, Terence},
      author={Wright, James},
       title={Endpoint multiplier theorems of {M}arcinkiewicz type},
        date={2001},
        ISSN={0213-2230},
     journal={Rev. Mat. Iberoamericana},
      volume={17},
      number={3},
       pages={521\ndash 558},
         url={https://doi.org/10.4171/RMI/303},
      review={\MR{1900894}},
}

\bib{MR2199086}{book}{
      author={Thiele, Christoph},
       title={Wave packet analysis},
      series={CBMS Regional Conference Series in Mathematics},
   publisher={Published for the Conference Board of the Mathematical Sciences,
  Washington, DC; by the American Mathematical Society, Providence, RI},
        date={2006},
      volume={105},
        ISBN={0-8218-3661-7},
         url={https://doi.org/10.1090/cbms/105},
      review={\MR{2199086}},
}

\bib{MR972707}{article}{
      author={Wilson, J.~Michael},
       title={Weighted norm inequalities for the continuous square function},
        date={1989},
        ISSN={0002-9947},
     journal={Trans. Amer. Math. Soc.},
      volume={314},
      number={2},
       pages={661\ndash 692},
         url={https://doi-org.prx.library.gatech.edu/10.2307/2001403},
      review={\MR{972707}},
}

\bib{MR2359017}{book}{
      author={Wilson, Michael},
       title={Weighted {L}ittlewood-{P}aley theory and exponential-square
  integrability},
      series={Lecture Notes in Mathematics},
   publisher={Springer, Berlin},
        date={2008},
      volume={1924},
        ISBN={978-3-540-74582-2},
      review={\MR{2359017}},
}

\end{biblist}
\end{bibdiv}

\end{document}